\newcommand{\id}{\mathrm{id}}
\newcommand{\isom}{\cong}
\newcommand{\sm}{\setminus}
\newcommand{\C}{\mathbb{C}}
\newcommand{\Q}{\mathbb{Q}}
\newcommand{\Z}{\mathbb{Z}}
\newcommand{\im}{\operatorname{im}}
\newcommand{\ip}[2]{\left< #1 , #2 \right>}
\newtheorem{thm}{Theorem}
\newtheorem{lemma}[thm]{Lemma}
\newtheorem{theorem}[thm]{Theorem}
\newtheorem{prop}[thm]{Proposition}
\newtheorem{cor}{Corollary}[thm]
\newcommand{\Ext}{\operatorname{Ext}}
\newcommand{\EA}{\Lambda}
\newcommand{\rank}{\operatorname{rank}}
\newcommand{\Gl}{\operatorname{Gl}}
\newcommand{\diag}{\operatorname{diag}}
\renewcommand{\d}[1]{\widehat{#1}}
\newcommand{\Hom}{\operatorname{Hom}}
\newcommand{\tensor}{\otimes}
\newcommand{\pt}{\text{\rm pt}}
\renewcommand{\P}{\mathbb{P}}
\newcommand{\heq}{\overset{\text{\rm h.e.}}{\simeq}}
\newcommand{\weq}{\simeq}
\begin{document}
\title{The $S^1$-Equivariant Cohomology of Spaces of Long Exact Sequences} \author{T.B. Williams}
\begin{abstract}Let $S$ denote the graded polynomial ring $\C[x_1,\dots,x_m]$. We interpret a chain
  complex of free $S$-modules having finite length homology modules as an $S^1$-equivariant map $\C^m\sm\{0\} \to
  X$, where $X$ is a moduli space of exact sequences. By computing the cohomology of such spaces $X$
  we obtain obstructions to such maps, including a slight generalization of the Herzog-K\"uhl
  equations.
\end{abstract}
\maketitle
\tableofcontents

\section{Introduction}
Let $X(c_1, \dots, c_n)$ denote the space of long exact sequences
\begin{equation*}
  \xymatrix{ 0 \ar[r] & k^{c_1} \ar[r] & k^{c_2} \ar[r] & \dots \ar[r] & k^{c_3} \ar[r] & 0 }
\end{equation*}
which we handle by fixing bases for each term and identifying a sequence with the $n-1$-tuple of
matrices representing the differentials. The principal results we obtain are a calculation of the
cohomology of this space, and a description of the $E_\infty$-page of a spectral sequence converging
to the $S^1$-equivariant cohomology of $X$.

The eventual applications we have in mind are to the conjecture of Buchsbaum-Eisenbud \& Horrocks
and the conjecture of Carlsson in homological algebra. The first of these conjectures, in a special
case, claims that if $M$ is a nonzero finite-length module over the graded polynomial ring $S = \C[x_1,
\dots, x_m]$, and if
\begin{equation*}
  \xymatrix{ 0 \ar[r] & F_a \ar[r] & F_{a-1} \ar[r] & \dots \ar[r] & F_0 \ar[r] & M }
\end{equation*}
is a free resolution, then $\rank F_i \ge \binom{m}{i}$, or more informally, that the smallest
resolution is the Koszul resolution.

Decomposing the free modules into their graded parts: $F_i = \bigoplus_{j=0}^\infty
S(j)^{\beta_{i,j}}$, we obtain a matrix $\beta_{i,j}$ of integers, referred to as the \textit{Betti
  table\/} of the resolution.

Recently, in \cite{ERMAN}, some progress was made on the conjecture by means of the striking theory of
Boij-Soderberg \cite{BOIJ-SODERBERG}, \cite{EISENBUDSCHREYER}, that states that the Betti table of a resolution
of such a module is constrained to be a rational linear combination of certain simpler tables. The
starting point for the study of Betti tables are the following formulas, which go by the name of the
Herzog-K\"uhl equations.
\begin{equation*}
  \sum_{i=1}^a \sum_{j \in \Z} (-1)^j\beta_{i,j}j^s = 0
\end{equation*}
where $s$ ranges over the integers $\{0, \dots, n\}$. 

Parallel to Buchsbaum-Eisenbud \& Horrocks conjecture is a conjecture of Carlsson, \cite{CARLSSON3},
which is more general in its applicability but less specific in its claims. According to that conjecture, if
$M$ is a differential-graded free module over $S$, and if $M$ has nonzero finite-length homology, then it
is claimed that $\rank M \ge 2^m$. 

The simplest examples of differential-graded modules are the total spaces of resolutions:
$\bigoplus_{i=0}^a F_i$. Occupying an intermediate position between resolutions on the one hand, and
the full generality of \textsc{dgm}s on the other, there are the chain complexes of graded free
modules
\begin{equation*}
  \Theta: \,\xymatrix{ 0 \ar[r] & F_1 \ar[r] & F_{2} \ar[r] & \dots \ar[r] & F_{a} \ar[r] & 0 }
\end{equation*}
having finite length homology.

The differential of $\Theta$ can be written as a matrix $D:\bigoplus_i F_i \to \bigoplus_i F_i$,
taking entries in the ring $S$. The graded nature of $D$ and $\bigoplus_i F_i$ ensures that the
polynomials appearing in $D$ are homogeneous, although they may be of varying degrees. We view such
a matrix as a map $\C^m \to \operatorname{Mat}_{N \times N}(\C)$, for the appropriate value of
$N$. A basic result, c.f.~\cite{CARLSSON2}, is that the homology of $\Theta$ is an $S$-module which
is supported only at the ideal $(x_1, \dots, x_m)$. In this case, this implies that evaluating
$\Theta$ at any point $x \in \C^m \sm \{0\}$ yields a long exact sequence. We have a map
\begin{equation*}
  D: \C^m \sm \{0\} \longrightarrow X
\end{equation*}
where $X$ is the moduli space of long exact sequences considered above. Since the entries in $D$ are
homogeneous, this map is compatible with a $\C^* \weq S^1$-action given by scalar multiplication. By
considering equivariant cohomology with respect to this $\C^*$-action, we are able to obtain
obstructions to the existence of $D$. These obstructions include, most notably, the Herzog-K\"uhl
equations, generalized to this case. There are also some other obstructions in a similar vein,
which, if the Buchsbaum-Eisenbud-Horrocks conjecture holds, are conjecturally vacuous in the case of
resolution, but may still have relevance in the more general case.

The usual derivation of the Herzog-K\"uhl equations for resolutions is by consideration of the
Hilbert polynomial of the module $M$ being resolved. The Hilbert polynomial, understood here as
defined on $K_0(\P^{n-1})$ corresponds via a Riemann-Roch theorem to the the Chern polynomial of $M$
in $CH^*(\P^{n-1})$, \cite[p. 490]{EISENBUD}. It should come as no surprise that the obstructions we
obtain in theorem \ref{t:HKE} are exactly the Chern classes of $M$ in the case of a resolution. It is
instructive to see quite how little is required for the derivation of these obstructions, since they
rely only on the $\C^*$-equivariance of the map $\C^m \sm \{0\} \to X$; no other property of
homogeneous polynomials is used.

All equivariant homology and cohomology will be Borel equivariant homology and cohomology with
respect to the group $S^1 \subset \C^*$ of unit complex numbers, unless otherwise stated. We
abbreviate $\Gl(n,\C)$ to $\Gl(n)$ throughout.

This paper will be recast in the language of equivariant motivic cohomology in future work, thus
extending the result to the case of arbitrary characteristic. Since motivic cohomology theory is
bigraded, having a `weight' grading in addition to the usual one. This grading facilitates a number
of arguments about differentials in spectral sequences, which are forced to vanish for reasons of
grading. As a consequence, we leave in sketch form a handful of arguments concerning spectral
sequence, mostly to the effect that certain classes are transgressive. All such arguments can be
carried out without recourse to algebraic theories, generally by ungainly arguments considering the
action of cohomology operations on the spectral sequence.

\section{Graded Vector Spaces}
\label{SecGRVS}

We work exclusively over $\C$. We understand a \textit{graded vector space $V$} to be a vector space
equipped with a decomposition $V = \bigoplus_{i \in \Z} V_i$. The subspace $V_i$ is the $i$-th
graded part of $V$. Graded vector spaces can be realized as representations of $S^1$, by letting
$S^1$ act on $V_i$ by $z \circ v = z^iv$. When we consider a basis of a graded vector space, we will
generally require that such a basis consist of homogeneous elements.

Suppose $V$, $W$ are finite dimensional graded vector spaces, then there are two ways in which we may give
$\Hom_\C(V,W)$ an $S^1$-action.
\begin{enumerate}
\item The left action, so that $(z \circ T) v = z \circ (Tv)$.
\item The left-right action so that: $(z \circ T)(z \circ v) = z \circ (Tv)$.
\end{enumerate}
Both actions will occur in the sequel. Since the action of $S^1$ on a graded vector space maps bases
to bases, it is easy to see that $z \circ A$ has the same rank as $A$ in either case. If we
fix graded bases $\{x_1, \dots,x_n\}$, $\{y_1, \dots, y_m\}$ of $V,W$, and if these elements lie in
gradings $\{v_1, \dots, v_n\}$, $\{w_1, \dots, w_m\}$ respectively, then a given map $ V \to W$ can
be written in matrix form, $A$. The two actions of $z \in S^1 \subset \C^*$ on $A$ are both given by 
matrix multiplication
\begin{equation*}
  z \cdot A =\begin{pmatrix} z^{w_1} & & & \\ &  z^{w_2} & &  \\
    & & \ddots  & \\ & & & z^{w_m} \end{pmatrix} A \qquad \text{ in the case of action on the left}
\end{equation*}
and
\begin{equation*}
  z \cdot A =\begin{pmatrix} z^{w_1} & & & \\ &  z^{w_2} & &  \\
    & & \ddots  & \\ & & & z^{w_m} \end{pmatrix} A \begin{pmatrix} z^{-v_1} & & & \\ &  z^{-v_2} & &  \\
    & & \ddots  & \\ & & & z^{-v_n} \end{pmatrix}
\end{equation*}
 in the case of action on both the left and right.
    
Given a finite-dimensional graded vector space $V$, it has a dual space $\Hom_\C(V,\C)$. This can be
given an $S^1$ action, and consequently a grading, in either of the two ways above, but in the case
of vector spaces over $\C$ there is also a third construction. We continue in our choice of basis
$\{x_i\}$ of $V$, and we can define a sesquilinear form by $\ip{x_i}{x_j} = \delta_{ij}$
and extending to all of $V$; this gives an explicit isomorphism $V \isom \d V$. We make $\d V$ into
a graded vector space by means of this isomorphism. It is not hard to see that the grading on $\d V$
we obtain in this way does not depend on the chosen graded basis. Henceforth, whenever we take the
dual of a graded vector space, we shall give it this grading.

We continue to employ the same bases $\{x_i\}$ and $\{y_i\}$ for the graded vector spaces $V$, $W$
as before, and deliberately confuse transformations and matrices. If $A \in \Hom(V,W)$, then $A$ has
a Hermitian conjugate dual, $A'$, which is a map of vector spaces $A' \in \Hom(\d W, \d V)$. With
this notation, we can assert the following.
\begin{prop} \label{foldingequivprec}
  The map $\Hom(V,W) \to \Hom(\d W, \d V)$ given by $A \mapsto A'$ is $S^1$-equivariant
  when both spaces are given the left-right action.
\end{prop}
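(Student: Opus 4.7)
The plan is to verify the equivariance by direct computation in matrix form, which the preceding discussion has set up explicitly. First I would write the left-right action on the source: for $A \in \Hom(V,W)$, we have
\begin{equation*}
  (z \cdot A)_{ji} = z^{w_j} A_{ji} z^{-v_i} = z^{w_j - v_i} A_{ji}.
\end{equation*}
Since the third construction of the dual uses the sesquilinear form $\langle x_i, x_j\rangle = \delta_{ij}$, the induced dual basis of $\hat V$ is again $\{x_i\}$, living in the same gradings $\{v_i\}$, and similarly for $\hat W$ with gradings $\{w_j\}$; the Hermitian conjugate $A'$ therefore has matrix entries $(A')_{ij} = \overline{A_{ji}}$.

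Next I would compute both sides of the equivariance equation. On the one hand,
\begin{equation*}
  \bigl((z \cdot A)'\bigr)_{ij} = \overline{(z \cdot A)_{ji}} = \overline{z^{w_j - v_i}}\,\overline{A_{ji}} = \bar z^{\,w_j - v_i} (A')_{ij}.
\end{equation*}
The crucial input is that $z \in S^1$, so $\bar z = z^{-1}$ and hence $\bar z^{\,w_j - v_i} = z^{v_i - w_j}$. On the other hand, applying the left-right action to $A' \in \Hom(\hat W, \hat V)$, whose source $\hat W$ has gradings $w_j$ and target $\hat V$ has gradings $v_i$, gives
\begin{equation*}
  (z \cdot A')_{ij} = z^{v_i}(A')_{ij} z^{-w_j} = z^{v_i - w_j}(A')_{ij}.
\end{equation*}
The two expressions agree, which is exactly the $S^1$-equivariance claim.

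There is not really a main obstacle here; the only subtle point is the reliance on $z$ lying in the unit circle (so that complex conjugation becomes inversion), which is precisely why the statement is phrased for the $S^1$-action rather than the full $\C^*$-action. It is worth noting in passing that the same computation would fail for either the plain left action or for the naive linear dual, since the sign flip produced by conjugation is what converts the exponents $w_j - v_i$ on the source into the exponents $v_i - w_j$ required on the target.
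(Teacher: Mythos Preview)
Your proof is correct and follows essentially the same approach as the paper: a direct matrix-entry computation of both sides of the equivariance identity, using that $\bar z = z^{-1}$ for $z \in S^1$. Your entrywise bookkeeping is in fact a little cleaner than the paper's version, which applies $z \circ A'$ to basis vectors, and your concluding remark about the necessity of restricting to $S^1$ is a worthwhile addition.
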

\begin{proof}
  This is most easily seen by arguing explicitly
  with matrices. Write $A = (a_{i,j})$. With respect to the left-right action on $A'$, we have
  \begin{equation*} \begin{split}
    (z \circ A' )( y_i) =  (z \circ A') ( z^{-w_j} ( z \circ y_i)) = z^{-w_j} (z \circ A')
    ( z \circ y_i) = \\ z^{-w_j} \Big[ z \circ \sum_{j=1}^n \overline a_{j,i}
    x_j \Big] =  z^{-w_j}\sum_{j=1}^n  \overline a_{j,i} z^{v_j} x_j \end{split}
  \end{equation*}
  as a result, we have
  \begin{equation*}
    z \circ A' =\begin{pmatrix} z^{v_1} & & & \\ &  z^{v_2} & &  \\
      & & \ddots  & \\ & & & z^{v_n} \end{pmatrix} A' \begin{pmatrix} z^{-w_1} & & & \\ &  z^{-v_2} & &  \\
      & & \ddots  & \\ & & & z^{-v_n} \end{pmatrix} = (z \circ A)',
  \end{equation*}
  as required.
\end{proof}

\section{The Homology and Cohomology of $\Gl(n)$}
\label{s:HoCoHoGln}

We take coefficients in a commutative ring $R$. In all applications $R$ shall be $\Z$, $\Z/p$ or
$\Q$.  The cohomology of $\Gl(n)$ is an exterior algebra $H^*(\Gl(n); R) = \EA_{R}(\alpha_1, \dots,
\alpha_n)$ with $|\alpha_i| = 2i-1$. The elements $\alpha_i$ are primitive, in the sense of Hopf
algebra. The homology is given a ring structure by the Pontrjagin product, which turns out to be an
exterior algebra again $H_*(\Gl(n);R) = \EA_{R}(\d{\alpha}_1, \dots, \d{\alpha}_n)$. Indeed one
has $\d{\beta}\d{\gamma} = \d{\beta \gamma}$ with respect to these two product structures.

There is a map $\Gl(n) \times \Gl(m) \to \Gl(n+m)$ by matrix direct-sum. The induced map on homology
is 
\begin{align*}\EA_{R}(\d{\alpha}_1, \dots, \d{\alpha}_n) \tensor \EA_{R}(\d{\alpha}'_1, \dots,
\d{\alpha}'_m) &\longrightarrow \EA_R(\d{\alpha}''_1, \dots, \d{\alpha}''_{n+m})\\
\d{\alpha}_i \tensor
\d{\alpha}_j' \longmapsto \d{\alpha}_i'' \d{\alpha}_j''\end{align*}

There are two noteworthy involutions $\Gl(n) \to \Gl(n)$. They are $\iota: a \mapsto a^{-1}$ and $c:
a \mapsto a'$. If we restrict to $U(n) \heq \Gl(n)$, they coincide. As a result, the automorphisms
they induce on homology and cohomology coincide. From the sequence of maps
\begin{equation*}
  \xymatrix{ \Gl(n) \ar^{\Delta}[r] & \Gl(n) \times \Gl(n) \ar[r]^{\iota \times \id} & \Gl(n) \times
    \Gl(n) \ar^{\mu}[r] & \Gl(n)}
\end{equation*}
we obtain a diagram on homology groups
\begin{equation*} 
  \xymatrix{ H_*(\Gl(n); R) \ar^{\Delta_*}[r] & H_*(\Gl(n) \times \Gl(n); R) \ar[r]^{\qquad \iota_*
      \tensor \id} & & \\
    & H_*(\Gl(n) \times \Gl(n); R) \ar[r] & H_*(\Gl(n); R) }
\end{equation*}
which, when applied to $\d\alpha_i$ gives
\begin{equation*}
  \xymatrix{ \d\alpha_i \ar@{|->}[r] & \d\alpha_i \tensor 1 + 1 \tensor \d\alpha_i \ar@{|->}[r] &
    \iota_*(\d\alpha_i) \tensor 1 + 1 \tensor \d\alpha_i \ar@{|->}[r] & c_*(\d\alpha_i) + \d\alpha_i}
\end{equation*}
since the composition is $\Gl(n) \to \{I_n\} \subset \Gl(n)$, we must have $c_*(\d\alpha) =
\iota_*(\d\alpha_i) = - \d\alpha_i$.

\section{The Equivariant Cohomology of $\Gl(n)$}

Given a space $X$ with a $G$ action, we call the Borel equivariant cohomology $H^*(EG \times_G X;
R)$ the \textit{equivariant cohomology}. There exists a fiber sequence $X \to EG \times_G X \to BG$,
which, under hypotheses which will always be satisfied in our applications, gives a Serre spectral
sequence
\begin{equation*}
  E_2^{p,q} = H^p( BG; H^q(X); R) \Longrightarrow H^{p+q}(EG \times_G X;R).
\end{equation*}
In general we shall not compute equivariant cohomology in full, but shall content ourselves to
describe the $E_2$-page of this spectral sequence and the differentials.

\subsection{The Left Action}

Let $\vec{w} = (w_1, \dots, w_n) \in \Z^n$ be a set of $n$ \textit{weights\/}. We begin by
considering the following left $S^1$-action on $\Gl(n)$.
\begin{equation*}
  \phi: S^1 \times \Gl(n) \to \Gl(n), \quad z \cdot A =\begin{pmatrix} z^{w_1} & & & \\ &  z^{w_2} &
    &  \\ 
    & & \ddots  & \\ & & & z^{w_n} \end{pmatrix} A
\end{equation*}

We consider initially the cohomology Serre spectral sequence arising from the fibration $ES^1
\times_{S^1} \Gl(n) \to BS^1 \isom \C P^{\infty}$, with fiber $\Gl(n)$. We write $T$ for the maximal
torus of $U(n) \subset \Gl(n)$. The inclusion $U(n) \to \Gl(n)$ is a homotopy equivalence. There is
a sequence of group homomorphisms $S^1 \overset{f}\to T \to U(n) \to \Gl(n)$, where the map $f$ is
given by $z \mapsto \diag{z^{w_1}, z^{w_2}, \dots, z^{w_n}}$. We obtain in this way a compatible
sequence of group actions on $\Gl(n)$.
\begin{equation*}
  \xymatrix{   S^1 \times \Gl(n) \ar[d] \ar[r] &  T \times \Gl(n)
    \ar[r] \ar[d] & \Gl(n) \times \Gl(n) \ar[d] \\
    \Gl(n) \ar@{=}[r] & \Gl(n) \ar@{=}[r] & \Gl(n) }
\end{equation*}
and consequently maps of fibrations, each with fiber $\Gl(n)$,
\begin{equation*}
  \xymatrix{ ES^1
    \times_{S^1} \Gl(n) \ar[r] \ar[d] & ET \times_T \Gl(n) \ar[r] \ar[d]  & E\Gl(n) \times_{\Gl(n)}
    \Gl(n) \ar[d] \\ BS^1 \ar[r] & BT \ar[r] & B\Gl(n) } 
\end{equation*}
The cohomology of $\Gl(n)$ is $H^*(\Gl(n);R) \isom \Lambda_R(\alpha_1,
\dots, \alpha_n)$, with $|\alpha_i| = 2i-1$. The cohomology of $B\Gl(n), BT$ and $BS^1$ are 
\begin{equation*}
  H^*(B\Gl(n); R) \isom R[c_1, \dots, c_n], \quad H^*(BT;R) \isom R[t_1, \dots, t_n], \quad
  H^*(BS^1;R) \isom R[t]
\end{equation*}
where $|c_i| = 2i$, and $|t_i| = |t| = 1$. The map induced by the inclusion $T \to \Gl(n)$ on the
cohomology of classifying spaces is $\psi: \Z[c_1, \dots, c_n] \to \Z[t_1, \dots, t_n]$ sending $c_i$ to
$\sigma_i(t_1, \dots, t_n)$, the $i$-the elementary symmetric function in the $t_j$. The map on the
cohomology of classifying spaces induced by $S^1 \to T$ is given by $\theta: \Z[t_1, \dots, t_n] \to
\Z[t]$, sending $t_i$ to $w_it$.

We remark that all cohomology groups we encounter are free $R$-modules, as a consequence there is
never any nontrivial behaviour in either K\"unneth or Universal Coefficient formulas.

The Serre spectral sequence arising from the $\Gl(n)$-fiber bundle $E\Gl(n) \times_{\Gl(n)}
\Gl(n) \to B\Gl(n)$ is simply the standard fibration $E\Gl(n) \to B\Gl(n)$. The action of the  differentials in
this spectral sequence is summarily described by 
\begin{equation*}
  d_{2i}(\alpha_i) = c_i, \quad d_{j}(\alpha_i) = 0 \text{ for $i \neq j$}
\end{equation*}
all other differentials can be deduced easily from these by using the product structure.

By comparison with this spectral sequence, the differentials in the Serre spectral sequence for the
fibration $ES^1 \times_{S^1} \Gl(n) \to BS^1$ can be computed. There is a comparison map,
which on the $E_2$-page is 
\begin{equation} \label{eq:leftAction1} 
  id \tensor (\theta\psi)^*: H^*(\Gl(n);R) \tensor H^*(B\Gl(n);R) \to H^*(\Gl(n);R) \tensor H^*(BS^1;R)
\end{equation}
In the latter spectral sequence, the differentials are now seen to be described by
\begin{equation} \label{eq:leftAction2} 
  d_{2i}(\alpha_i) = \sigma_i(\vec{w})t^i \pmod{\sigma_1(\vec{w})t^i, \sigma_2(\vec{w})t^i, \dots,
    \sigma_{i-1}(\vec{w})t^i}, \quad d_j(\alpha_i) = 0 \text{ for $i \neq j$} 
\end{equation}
As with the former sequence, the differentials on all other elements may be deduced from those
given.

\subsection{Action on both Left \& Right}

We now consider an action of $S^1$ on $\Gl(n)$ both on the left and on the right. If $\vec{u},
\vec{v} \in \Z^n$ are two $n$-tuples of integers, one defines an action of $S^1$ on $\Gl(n)$ by
\begin{equation*}
  z\cdot A =
  \begin{pmatrix}
    z^{u_1} & & & \\ & z^{u_2} & & \\ & & \ddots & \\ & & & z^{u_n}
  \end{pmatrix} A
  \begin{pmatrix}
    z^{-v_1} & & & \\ & z^{-v_2} & & \\ & &  \ddots & \\ & & & z^{-v_n}
  \end{pmatrix}
\end{equation*}

The argument in this section being rather long, if routine, we state it here for convenience of
reference
\begin{prop} \label{p:lrsss}
  For the $S^1$ action given above, the Serre spectral sequence of the fiber sequence $\Gl(n) \to
  ES^1 \times_{S^1} \Gl(n) \to BS^1$ has $E_2$-page
  \begin{equation*}
    H^*(\Gl(n); R) \tensor H^*(BS^1;R) \isom \EA_R(\alpha_1, \dots, \alpha_n) \tensor R[\theta].
  \end{equation*}
  The differentials in this sequence are described summarily by
  \begin{equation*}
    d_{2i}(\alpha_i) = \left[\sigma_i(\vec{u}) - \sigma_i(\vec{v})\right]\theta^i \pmod{
      \sigma_1(\vec{u}) - \sigma_1(\vec{v}) , \dots, \sigma_{i-1}(\vec{u}) - \sigma_{i-1}(\vec{v})}
  \end{equation*}
  where $i$ is any integer between $1$ and $n$; the differentials can be deduced on all other
  elements by means of the product structure.
\end{prop}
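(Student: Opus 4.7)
The plan is to carry out the same program as in the left-action case: factor the $S^1$-action through a universal bundle and use naturality. The appropriate universal action is that of $\Gl(n) \times \Gl(n)$ on $\Gl(n)$ by $(g,h) \cdot A = g A h^{-1}$; the given $S^1$-action is pulled back from this via $S^1 \to T \times T \to \Gl(n) \times \Gl(n)$, where $z \mapsto (\diag(z^{u_1},\dots,z^{u_n}), \diag(z^{v_1},\dots,z^{v_n}))$. This yields a morphism of fiber sequences with induced cohomology map $R[c_1, c_1', \dots, c_n, c_n'] \to R[\theta]$ given by $c_i \mapsto \sigma_i(\vec{u}) \theta^i$ and $c_i' \mapsto \sigma_i(\vec{v}) \theta^i$, so it suffices to compute the differentials in the universal Serre spectral sequence over $B\Gl(n) \times B\Gl(n)$.

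The $\Gl(n) \times \Gl(n)$-action on $\Gl(n)$ is transitive with stabilizer the diagonal $\Delta\Gl(n)$, so $\Gl(n) \isom (\Gl(n) \times \Gl(n))/\Delta\Gl(n)$. Consequently, the Borel construction $E(\Gl(n) \times \Gl(n)) \times_{\Gl(n)\times\Gl(n)} \Gl(n) \simeq B\Delta\Gl(n) \simeq B\Gl(n)$, with projection to the base induced by the diagonal inclusion; on cohomology this is the surjection $R[c_1, c_1', \dots, c_n, c_n'] \onto R[C_1, \dots, C_n]$ sending $c_i, c_i' \mapsto C_i$, whose kernel is the ideal $I = (c_1 - c_1', \dots, c_n - c_n')$. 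Since the $E_\infty$-page of the universal Serre spectral sequence must be concentrated in the base row and equal to the associated graded of $R[C]$, every $\alpha_i$ must transgress, and the ideal generated by $\{d_{2i}(\alpha_i) : i = 1, \dots, n\}$ coincides with $I$.

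To pin down the individual differentials, I use naturality with respect to the two factor inclusions. The first factor inclusion $g \mapsto (g, I)$ restricts the action to left multiplication, so by the left-action case of the previous subsection the image of $d_{2i}(\alpha_i)$ in $R[c]/(c_1, \dots, c_{i-1})$ equals $c_i$. The second factor inclusion restricts the action to $A \mapsto A h^{-1}$; by the intertwining $\iota \circ \phi_R(h) = \phi_L(h) \circ \iota$ with the inversion $\iota: A \mapsto A^{-1}$, its Serre spectral sequence is isomorphic to the left-action one by a morphism which is the identity on the base and acts as $\iota^* = -1$ on each $\alpha_i$ in the fiber (using the computation in Section \ref{s:HoCoHoGln}); therefore the image of $d_{2i}(\alpha_i)$ on this side is $-c_i'$ modulo previous differentials. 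Combining these constraints, we may write $d_{2i}(\alpha_i) \equiv c_i - c_i' + Q_i(c_1, \dots, c_{i-1}) \pmod{(c_j - c_j' : j < i)}$, where $Q_i$ is a polynomial purely in the unprimed $c_*$. An inductive argument using the ideal-generation requirement, together with the observation that $R[c_1, \dots, c_{i-1}] \cap (c_1 - c_1', \dots, c_{i-1} - c_{i-1}') = 0$ as subsets of $R[c, c']$, then forces $Q_i = 0$.

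Pulling this identity back to $BS^1$ via $c_j \mapsto \sigma_j(\vec{u}) \theta^j$, $c_j' \mapsto \sigma_j(\vec{v}) \theta^j$ immediately yields the claimed formula. The main obstacle is the last step of the previous paragraph: controlling the correction polynomials $Q_i$ and showing they vanish. One must combine the two naturality constraints, the inductive hypothesis, and the ideal-generation requirement carefully; the sign discrepancy between the contributions of the two factors, which produces the crucial minus sign in $c_i - c_i'$, rests entirely on the computation $\iota^*(\alpha_i) = -\alpha_i$ from Section \ref{s:HoCoHoGln}.
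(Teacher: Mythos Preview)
Your overall strategy---factor the $S^1$-action through the universal $\Gl(n)\times\Gl(n)$-action and compare along the two factor inclusions---coincides with the paper's. The substantive difference lies in how the correction term $Q_i$ is eliminated. The paper exploits the involution $(g,h,A)\mapsto(h,g,A^{-1})$ of the universal action, obtaining the antisymmetry $p_i(c,c')\equiv -p_i(c',c)$ modulo previous transgressions; setting $c=c'$ then forces the correction to vanish. You instead identify the total space of the universal fibration with $B\Gl(n)$ and use that the edge homomorphism $R[c,c']\to R[C]$ is surjective with kernel $I=(c_1-c_1',\dots,c_n-c_n')$: this pins down the ideal generated by all transgressions as exactly $I$, so each $p_i$ must lie in $I$, whence $Q_i\in R[c_1,\dots,c_{i-1}]\cap I=0$. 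Your route is a little more conceptual and sidesteps the $2$-torsion caveat implicit in the paper's antisymmetry argument; the paper's route is more self-contained, since it does not invoke the Borel-type fact that the exterior generators transgress when $E_\infty$ collapses to the base row.

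One small slip in your write-up: the intersection you need is $R[c_1,\dots,c_{i-1}]\cap I$ with the \emph{full} ideal $I=(c_1-c_1',\dots,c_n-c_n')$, not with the smaller ideal $(c_j-c_j':j<i)$. The point is that the total-space identification gives $p_i\in I$, and since $c_i-c_i'$ and the inductively known $p_1,\dots,p_{i-1}$ already lie in $I$, one deduces $Q_i\in I$; then your vanishing-intersection observation (which holds for $I$ by evaluating at $c_j'=c_j$) finishes the job. With that correction, the argument is complete.
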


A portion of this spectral sequence is illustrated below

\begin{equation*}
  \xymatrix{ \alpha_3 & 0 &\alpha_3\theta & 0 & \alpha_3 \theta^2 \\
    \alpha_1 \alpha_2 \ar@{..>}^{d_2}[drr]  \ar@{..>}^{d_4}[dddrrrr] & 0 & \alpha_1 \alpha_2 \theta
    \ar@{..>}^{d_2}[drr] & 0 &  \alpha_1 \alpha_2 \theta^2\\ 
    \alpha_2 \ar^{d_4}[dddrrrr] & 0 & \alpha_2\theta & 0 & \alpha_2 \theta^2 \\
    0 & 0 & 0 & 0 & 0 \\
    \alpha_1 \ar^{d_2}[drr]& 0 & \alpha_1 \theta \ar@{..>}^{d_2}[drr] & 0 & \alpha_1 \theta^2 \\
    1 & 0 & \theta & 0 & \theta^2 
  }
\end{equation*}
The differentials denoted by dotted arrows can be deduced from those denoted by solid arrows. A
differential between illustrated groups that is not marked by an arrow must be $0$.

There is a group homomorphism $f:S^1 \to \Gl(n) \times \Gl(n)$ given by
\begin{equation*}
  f(z) = \left(  \begin{pmatrix}
    z^{w_1} & & & \\ & z^{w_2} & & \\ & & \ddots & \\ & & & z^{w_n}
  \end{pmatrix},   \begin{pmatrix}
    z^{v_1} & & & \\ & z^{v_2} & & \\ & &  \ddots & \\ & & & z^{v_n}
  \end{pmatrix} \right)
\end{equation*}
Let $\Gl(n) \times \Gl(n)$ act on $\Gl(n)$ by $(g,h)\cdot A = gAh^{-1}$. We write the action map as
$\alpha: (\Gl(n) \times \Gl(n)) \times \Gl(n) \to \Gl(n)$. There is a
commutative diagram
\begin{equation*}
  \xymatrix{ S^1 \times \Gl(n) \ar[d] \ar^f[rr]  & & (\Gl(n) \times \Gl(n)) \times \Gl(n) \ar^{\alpha}[d] \\
    \Gl(n) \ar@{=}[rr] & & \Gl(n)}
\end{equation*}
We concentrate for now on the action $\alpha$, and in particular, on the Serre spectral sequence
associated with the fibration $E(\Gl(n) \times \Gl(n)) \times_{\Gl(n) \times \Gl(n)} \Gl(n) \to
B(\Gl(n) \times \Gl(n))$.

In the first place we know that $B(\Gl(n) \times \Gl(n)) \weq B\Gl(n) \times B\Gl(n)$, and the
cohomology $H^*(B(\Gl(n)\times \Gl(n)); R)$ is $R[c_1, c_1', c_2, c_2',\dots, c_n, c_n']$. The
homomorphisms $\iota_1, \iota_2: \Gl(n) \to \Gl(n) \times \Gl(n)$ sending $a \mapsto (a,I_n)$ and $a
\mapsto (I_n,a)$ respectively, induce evaluations of $R[c_1, c_1', c_2, c_2',\dots, c_n, c_n']$ at
$(c_1', \dots, c_n') = 0$ and $(c_1, \dots, c_n) =0$ respectively.\footnote{The two projection homomorphisms
$\Gl(n) \times \Gl(n) \to \Gl(n)$ induce the inclusions $R[c_1, \dotsm c_n], R[c_1', \dotsm c_n']
\subset R[c_1,c_1', \dots, c_n,c_n']$, but we shall not use this fact in the sequel.}

There is a diagram
\begin{equation*}
  \xymatrix{ \Gl(n) \times \Gl(n) \ar^{\iota_1, \id}[r] \ar[d] & (\Gl(n) \times \Gl(n)) \times \Gl(n)
    \ar[d] \\ \Gl(n) \ar@{=}[r] & \Gl(n) }
\end{equation*}
where the action on the left is the usual action of $\Gl(n)$ on itself on the left. There is a map
of Serre spectral sequences, which on the $E_2$-page is
\begin{equation*}
  \xymatrix{ H^*(\Gl(n); R) \tensor H^*(B(\Gl(n) \times \Gl(n));R) \ar[d]^{\id \tensor \iota_1^*}
    \ar@{=>}[r] & H^*(E(\Gl(n) \times \Gl(n)) \times_{\Gl_n \times \Gl_n} \Gl(n); R) \ar[d] \\
     H^*(\Gl(n); R) \tensor H^*(B\Gl(n);R) \ar@{=>}[r] & H^*(E\Gl(n) \times_{\Gl_n} \Gl(n); R) }
\end{equation*}
the lower sequence of which has already been described in equations \eqref{eq:leftAction1} and
\eqref{eq:leftAction2}. By comparing the sequences, we know the generators $\alpha_i$ of
$H^*(\Gl(n);\Z)$ are transgressive in the upper sequence. General considerations imply that the
differentials in the upper sequence satisfy
\begin{equation*}
  d_{2i}(\alpha_i) = p_i(c_1,c_1', \dots, c_i, c_i') \pmod{ p_1, \dots, p_{i-1}}
\end{equation*}
where $p_i(c_1, c_1', \dots, c_i, c_i')$ is a homogeneous polynomial of degree $2i$. By comparison
with the lower sequence, we know that after evaluating at $c_1' = c_2' = \dots =c_n' = 0$, this
polynomial becomes precisely $c_i$. By degree-counting, we know that
\begin{equation} \label{eq:pidef} 
  p_i = c_i + ac_i' + q_i(c_1, c_1', c_2, c_2', \dots, c_{i-1}, c_{i-1}') \pmod{ p_1, \dots, p_{i-1}}
\end{equation}

There is an obvious involution, $\tau$, on $\Gl(n) \times \Gl(n)$ interchanging the two factors, and
an involution, $\sigma$, on $\Gl(n)$, sending $A$ to $A^{-1}$. These are compatible in the sense that the
following commutes
\begin{equation*}
  \xymatrix{ \left( \Gl(n) \times \Gl(n) \right) \times \Gl(n) \ar^{\tau, \sigma}[r] \ar[d] &
    \left( \Gl(n) \times \Gl(n) \right) \times \Gl(n) \ar[d] \\
    \Gl(n) \ar^{\sigma}[r] & \Gl(n)}
\end{equation*}
The vertical action maps are $\alpha$ both cases (this is simply the equation $hA^{-1}g^{-1} =
(gAh^{-1})^{-1}$). There is a resulting map of Serre spectral sequences associated with these group
actions, it is in this case an automorphism, which on the $E_2$-page manifests as
\begin{align*}
   H^*(\Gl(n); R) \tensor H^*(B(\Gl(n) \times \Gl(n));R) & \longrightarrow H^*(\Gl(n); R)
    \tensor H^*(B(\Gl(n) \times \Gl(n));R)   \\
    \begin{matrix} \alpha_i \tensor 1 \\  1 \tensor c_i \\ 1 \tensor c_i'\end{matrix}
    &\longmapsto
    \begin{matrix} -\alpha_i \tensor 1 \\ 1 \tensor c_i' \\ 1 \tensor c_i \end{matrix}
\end{align*}
From this and linearity of the differentials, one sees that 
\begin{equation*}
  p_i(c_1, c_1', \dots, c_i, c_i') = d_{2i}(\alpha_i) = - d_{2i}(-\alpha_i) = -p_i(c_1', c_1, \dots,
  c_i', c_i) \pmod{p_1, \dots, p_{i-1}}.
\end{equation*}
Comparing this with equation \eqref{eq:pidef}, we see that $a = -1$ and furthermore that $q_i$ also
satisfies
\begin{equation} \label{eq:qidef} 
  q_i(c_1',c_1, c_2', c_2, \dots, c_{i-1}', c_{i-1}) =  - q_i(c_1,c_1', c_2, c_2', \dots, c_{i-1},
  c_{i-1}') \pmod{ p_1, \dots, p_{i-1}}  
\end{equation}

One now sees by a simple induction that the following holds
\begin{lemma}
  In the notation of the previous paragraphs, for all $i \le n$
  \begin{equation*}
    d_{2i}(\alpha_i) = c_i - c_i' \pmod{ c_1-c_1', c_2-c_2', \dots, c_{i-1}-c_{i-1}'} 
  \end{equation*}
\end{lemma}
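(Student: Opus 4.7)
I would argue by induction on $i$, using as tools only the two relations already established in equations \eqref{eq:pidef} and \eqref{eq:qidef}, together with the identification of the ideal generated by the $p_j$.

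The base case $i=1$ is essentially free: $q_1$ is, by construction, a homogeneous polynomial of degree $2$ in no variables, hence zero, so \eqref{eq:pidef} already reads $p_1 = c_1 - c_1'$. For the inductive step, suppose the statement has been shown for all $j < i$. Then the two ideals
\begin{equation*}
  (p_1, \dots, p_{i-1}) \quad\text{and}\quad (c_1 - c_1', \dots, c_{i-1}-c_{i-1}')
\end{equation*}
coincide inside $R[c_1, c_1', \dots, c_n, c_n']$, and \eqref{eq:pidef} reduces the problem to showing that the correction term $q_i(c_1, c_1', \dots, c_{i-1}, c_{i-1}')$ lies in the second ideal, i.e.\ that $q_i$ vanishes after the substitution $c_j' \mapsto c_j$ for all $j < i$.

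To see this, I would substitute $c_j' = c_j$ ($j<i$) directly into the antisymmetry relation \eqref{eq:qidef}. The right hand side is zero modulo the ideal, and the left hand side becomes $2 q_i(c_1, c_1, c_2, c_2, \dots, c_{i-1}, c_{i-1})$ inside the integral domain $R[c_1, \dots, c_{i-1}]$. For $R = \Z$, $\Q$, or $\Z/p$ with $p$ odd, this forces $q_i(c_1, c_1, \dots, c_{i-1}, c_{i-1}) = 0$, hence $q_i \in (c_1 - c_1', \dots, c_{i-1}-c_{i-1}')$, completing the inductive step.

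The one genuine obstacle is the case $R = \Z/2$, where the factor of $2$ in the antisymmetry argument carries no information. I would sidestep this by carrying out the entire proof above over $R = \Z$ first, obtaining the statement integrally; then, since every cohomology group in sight is a free abelian group (as noted in the remark on K\"unneth formulas) the Serre spectral sequence with $R$-coefficients is obtained from the integral one by tensoring with $R$, and the differentials are compatible with this change of coefficients. The mod-$2$ statement follows formally. I expect no further subtleties: the only ingredients are the two identities \eqref{eq:pidef} and \eqref{eq:qidef} and the fact that replacing each $p_j$ by $c_j - c_j'$ generates the same ideal, which is exactly what the induction delivers.
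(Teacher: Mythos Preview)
Your argument is correct and follows essentially the same route as the paper's own proof: induction on $i$, identification of $(p_1,\dots,p_{i-1})$ with $(c_1-c_1',\dots,c_{i-1}-c_{i-1}')$ via the inductive hypothesis, and then use of the antisymmetry relation \eqref{eq:qidef} after specializing $c_j$ and $c_j'$ to a common value to kill $q_i$. You are in fact more careful than the paper on one point: the paper's concluding line tacitly invokes $2q_i(c_1',c_1',\dots)=0 \Rightarrow q_i(c_1',c_1',\dots)=0$ without comment, whereas you isolate the characteristic-$2$ case and dispose of it by first proving the statement over $\Z$ and then base-changing, exploiting the freeness of all modules involved.
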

\begin{proof}
  This is evident for $i=1$. Suppose the result holds for $i-1$. Then, from equation
  \eqref{eq:pidef}, we have
  \begin{equation*}
    d_{2i}(\alpha_i) = c_1 - c_1' + q_i(c_1,c_1',c_2, c_2', \dots, c_{i-1},c_{i-1}') \pmod{
      c_1-c_1', c_2-c_2', \dots, c_{i-1}-c_{i-1}'}.
  \end{equation*}
  It is a basic fact that
  \begin{equation*} \begin{split}
    q_i(c_1,c_1',c_2, c_2', \dots, c_{i-1},c_{i-1}') = q_i(c_1',c_1',c_2', c_2', \dots,
    c_{i-1}',c_{i-1}') \\ \pmod{
      c_1-c_1', c_2-c_2', \dots, c_{i-1}-c_{i-1}'} \end{split}
  \end{equation*}
  but by comparison with equation \eqref{eq:qidef} we see that the left hand side above is $0$.
\end{proof}
\medskip

We now return to consideration of the action of $S^1$ on $\Gl(n)$ on the left \& right. Our basic
tool is the homomorphism $S^1 \to \Gl(n) \times \Gl(n)$. This induces a homomorphism on the
cohomology of classifying spaces
\begin{equation*}
  R[c_1, c_1', c_2, c_2', \dots, c_n,c_n'] = H^*(B(\Gl(n) \times B\Gl(n)); R) \to \Z[t] =
  H^*(BS^1; R)
\end{equation*}
which is given by $c_i \mapsto \sigma_i(\vec{w})t^i$ and $c_i' \mapsto \sigma_i(\vec{v})t^i$.

The group homomorphism being compatible with the action on $\Gl(n)$, we have a map of Serre spectral
sequences, on the $E_2$-page this manifests as
\begin{align*}
  H^*(\Gl(n); R) \tensor H^*(B(\Gl(n) \times \Gl(n));R) &\longrightarrow H^*(\Gl(n); R)
  \tensor H^*(BS^1;R) \\
  \begin{pmatrix} \alpha_i \tensor 1 \\   1 \tensor c_i \\    1 \tensor c_i'\end{pmatrix} 
  &\longmapsto  
  \begin{pmatrix} \alpha_i \tensor 1 \\ 1 \tensor \sigma_i(\vec{w})t^i \\ 1 \tensor
    \sigma_i(\vec{v})t^i \end{pmatrix}
\end{align*}
the differentials in the second sequence can be determined from this comparison, since we know
that
\begin{equation*}
  d_i(\alpha_i) = \sigma_i(\vec{v})t^i - \sigma_i(\vec{v})t^i
\pmod{ \sigma_1(\vec{v}) - \sigma_1(\vec{v}), \sigma_2(\vec{v}) - \sigma_2(\vec{v}), \dots,
  \sigma_{i-1}(\vec{v}) - \sigma_{i-1}(\vec{v})}
\end{equation*}
as claimed in proposition \ref{p:lrsss}.

\section{The Equivariant Cohomology of Stiefel Manifolds}

We shall consider the unreduced Stiefel manifolds, to wit, the spaces $W(n,m)$ of sequences of $m$
linearly independent vectors in $\C^n$. One has $W(n,n) = \Gl(n)$, of course.  One can view $W(n,m)$
either as the space of surjective maps $\C^n \to \C^m$ or as the space of injective maps $\C^m \to
\C^n$.  We adopt the former interpretation for preference. Given $m' \le n- m$, and a surjective map
$A: \C^n \to \C^{m+m'}$, we can compose with the evident projection $\C^{m+m'} \to \C^m$, and so obtain
a surjective map $A' : \C^n \to \C^m$. We therefore have a map $\pi: W(n,m+m') \to W(n,m)$. In coordinates,
this map is straightforward
\begin{equation*}
  \begin{pmatrix}
    a_{1,1} & \dots & a_{m,1} & a_{m+1,1} & \dots & a_{m+m',1} \\ 
    \vdots & & \vdots & \vdots & & \vdots \\
     a_{1,n} & \dots & a_{m,n} & a_{m+1,n} & \dots & a_{m+m',n}
  \end{pmatrix} \mapsto
\begin{pmatrix}
    a_{1,1} & \dots & a_{m,1} \\ 
    \vdots & & \vdots & \\
     a_{1,n} & \dots & a_{m,n}
  \end{pmatrix}
\end{equation*}

We now summarise what we will need of the non-equivariant cohomology of such Stiefel manifolds.
\begin{prop} \label{p:CohoStiefel}
  The cohomology of the Stiefel manifold $W(n,m)$ is given by
  \begin{equation*}
    H^*(W(n,m);\Z) \isom \EA_\Z(\alpha_{n-m+1}, \alpha_{n-m+1}, \dots, \alpha_n).
  \end{equation*}
  For $n \ge m+m'$, there exists a projection map $\pi: W(n,m+m') \to W(n,m)$ (as above) by taking the first $m$
  of $m'$ vectors. On cohomology, such a map induces an inclusion
  \begin{equation*}
    \xymatrix{   H^*(W(n,m);R) \isom  \EA_R(\alpha_{n-m+1}, \alpha_{n-m+1}, \dots, \alpha_n)
      \ar[r] &  \EA_R(\alpha_{n-m+1}, \alpha_{n-m'+1}, \dots, \alpha_n) \\
    \alpha_i \ar@{|->}[r] & \alpha_i } 
  \end{equation*}
\end{prop}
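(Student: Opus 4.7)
The plan is to compute $H^*(W(n,m); \Z)$ by induction on $m$ using the sphere bundle obtained by forgetting the last vector. The forgetful map $W(n,m) \to W(n, m-1)$ sending $(v_1, \ldots, v_m) \mapsto (v_1, \ldots, v_{m-1})$ has fiber over $(v_1, \ldots, v_{m-1})$ equal to $\C^n \setminus \operatorname{span}(v_1, \ldots, v_{m-1})$, which is homotopy equivalent to $S^{2(n-m+1)-1} = S^{2n-2m+1}$. The base case $m=1$ is immediate since $W(n, 1) = \C^n \setminus \{0\} \simeq S^{2n-1}$, giving $H^*(W(n,1); \Z) \cong \Lambda_\Z(\alpha_n)$ with $|\alpha_n| = 2n-1$.

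For the inductive step, I apply the Serre spectral sequence of $S^{2n-2m+1} \to W(n,m) \to W(n, m-1)$. Its $E_2$-page is, as a bigraded algebra, $\Lambda_\Z(\alpha_{n-m+2}, \ldots, \alpha_n) \otimes \Lambda_\Z(\alpha_{n-m+1})$: the first factor is $H^*(W(n, m-1); \Z)$ by induction, the second is $H^*(S^{2n-2m+1}; \Z)$. The only potentially nonzero differential on the fiber class $\alpha_{n-m+1}$ is the transgression $d_{2n-2m+2}(\alpha_{n-m+1}) \in H^{2n-2m+2}(W(n, m-1); \Z)$. A degree count kills this class: every nonzero element of the inductive base is a product of distinct generators $\alpha_j$ with $j \geq n-m+2$, and such a product has total degree at least $2(n-m+2)-1 = 2n-2m+3$, so the group in degree $2n-2m+2$ vanishes. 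The spectral sequence thus collapses at $E_2$. The multiplicative structure is exterior because $\alpha_{n-m+1}^2 = 0$ already holds in $H^*(S^{2n-2m+1};\Z)$, leaving no room for a nontrivial extension; so $H^*(W(n, m); \Z) \cong \Lambda_\Z(\alpha_{n-m+1}, \ldots, \alpha_n)$, with $\alpha_{n-m+1}$ chosen as a cohomology lift of the fiber generator.

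For the naturality statement, I factor the projection $\pi: W(n, m+m') \to W(n, m)$ as the composition
\[
W(n, m+m') \longrightarrow W(n, m+m'-1) \longrightarrow \cdots \longrightarrow W(n, m+1) \longrightarrow W(n,m).
\]
Each stage is a sphere bundle whose Serre spectral sequence collapses as above, and the edge homomorphism identifies $H^*(W(n, k); \Z)$ with the subalgebra of $H^*(W(n, k+1); \Z)$ generated by $\alpha_{n-k+1}, \ldots, \alpha_n$. Iterating these inclusions yields $\pi^*(\alpha_i) = \alpha_i$ for $i \geq n-m+1$.

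The real work of the proof lies in choosing the generators $\alpha_i$ consistently across all the Stiefel manifolds so that naturality under these forgetful maps is tautological (one can do this once and for all by using the identification $W(n,n)=\Gl(n)$ and pulling back along $\Gl(n)\to W(n,m)$, agreeing with the sphere-bundle generators through the collapse). The essential homotopical input is the degree count above forcing the transgression to vanish.
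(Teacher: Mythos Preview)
The paper does not prove this proposition; it is stated as a summary of standard facts (``We now summarise what we will need of the non-equivariant cohomology of such Stiefel manifolds''), so there is no proof to compare against. Your argument via the Serre spectral sequence of the sphere bundle $S^{2n-2m+1}\to W(n,m)\to W(n,m-1)$ is the standard one and is correct. One small point: your justification that the multiplicative extension is trivial (``$\alpha_{n-m+1}^2=0$ already holds in $H^*(S^{2n-2m+1};\Z)$, leaving no room for a nontrivial extension'') is not quite the right reasoning, since vanishing in the fiber does not by itself rule out a nonzero lift in higher Serre filtration. The clean argument is that $\alpha_{n-m+1}$ has odd degree, so graded commutativity gives $2\alpha_{n-m+1}^2=0$, while the collapsed $E_\infty$ shows $H^*(W(n,m);\Z)$ is torsion-free; hence $\alpha_{n-m+1}^2=0$.
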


If the vector spaces are given a grading, the space $W(n,m)$ is subject to a left-right action. More
precisely, if $U,V$ are graded vector spaces of dimensions $n$ and $m$ respectively, admitting bases
of homogeneous elements in degrees $(u_1, \dots, u_n) \in \Z^n$ and $(v_1, v_2, \dots, v_m) \in
\Z^m$, and if $A: V \to W$ is a surjective map, then we can write $A$ with respect to the given
bases, and the left-right action of $S^1$ of $A$ is given by
\begin{equation*}
   z\cdot A =
  \begin{pmatrix}
    z^{w_1} & & & \\ & z^{w_2} & & \\ & & \ddots & \\ & & & z^{w_m}
  \end{pmatrix} A
  \begin{pmatrix}
    z^{-v_1} & & & \\ & z^{-v_2} & & \\ & &  \ddots & \\ & & & z^{-v_n}
  \end{pmatrix}
\end{equation*}

Given graded vector spaces $V, V'$ of dimensions $m,m'$ with $m+m' \le n$, and a surjective map $A:U
\to V \oplus V'$, we can take the composite $A: U \to V \oplus V' \to V$. This gives an
$S^1$-equivariant version of the map $\pi$ in the obvious manner.

\begin{theorem} \label{th:EqCohStiefel}
  Suppose $U,V$ are $n$, $m$-dimensional graded vector spaces, respectively, with admitting bases of
  homogeneous elements in degrees $\vec{u} =(u_1, \dots, u_n) \in \Z^n$ and $\vec{v} = (v_1, v_2,
  \dots, v_m) \in \Z^m$. Let $W(n,m) \to ES^1 \times_{S^1} W(n,m) \to BS^1$ be the fiber sequence
  associated with the induced $S^1$-action on $W(n,m)$, the space of surjective maps $U \to V$. Let
  $s_1, \dots, s_{n-m}$ be integers defined (recursively) by the relations
  \begin{equation*}
    \sigma_i(\vec{u}) = \sum_{j =1}^i \sigma_j(\vec{v})s_{i-j}.
  \end{equation*}
  and let $s_i =0$ for $i > n-m$.

  The Serre spectral sequence associated with the given fibration
  has $E_2$-page
  \begin{equation*}
    H^*(W(n,m); \Z) \tensor H^*(BS^1; \Z) = \EA_\Z(\alpha_{n-m+1}, \dots, \alpha_n) \tensor \Z[\theta]
  \end{equation*}
  Let $n-m+1 \le k \le n$ and suppose that 
  \begin{equation*}
    d_{2j}(\alpha_j) = 0 \quad \text{ for $n-m+1 \le i \le k$}
  \end{equation*}
  then we have, in the given spectral sequence
  \begin{equation} \label{eq:modprel}
    d_{2k}(\alpha_k) = \big[\sigma_k(\vec{u}) - \sum_{j=1}^k \sigma_j(\vec{v})s_{k-j}\big] \theta^k
  \end{equation}
\end{theorem}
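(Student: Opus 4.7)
The plan is to compare with the $\Gl(n) = W(n,n)$ computation from Proposition~\ref{p:lrsss}. Introduce an auxiliary graded vector space $V'$ of dimension $n-m$ with weight vector $\vec{v}' \in \Z^{n-m}$, to be chosen. Realize $W(n,n) = \Gl(n)$ as the space of surjections $U \to V \oplus V'$, and let $\pi: W(n,n) \to W(n,m)$ be postcomposition with the quotient $V \oplus V' \onto V$; this is an instance of the projection described before Proposition~\ref{p:CohoStiefel}, and it is $S^1$-equivariant. The induced morphism of Serre spectral sequences on the Borel fibrations over $BS^1$ is, on the $E_2$-page, the identity on the base $H^*(BS^1) = \Z[\theta]$ and the inclusion $\alpha_i \mapsto \alpha_i$ on the fiber for $n-m+1 \le i \le n$, by Proposition~\ref{p:CohoStiefel}.

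Write $\mu_j := \sigma_j(\vec{u}) - \sigma_j(\vec{v}, \vec{v}')$. Proposition~\ref{p:lrsss} gives
\[ d_{2k}(\alpha_k) \equiv \mu_k \theta^k \pmod{\mu_1\theta,\, \mu_2\theta^2,\, \ldots,\, \mu_{k-1}\theta^{k-1}} \]
in the $W(n,n)$ spectral sequence. The theorem's hypothesis, together with the absence of any generator $\alpha_j$ for $j \le n-m$ in $H^*(W(n,m))$, forces $E_{2k}(W) = E_2(W)$, so $d_{2k}^W(\alpha_k) = c_k \theta^k$ for a unique $c_k \in \Z$ that does not depend on $\vec{v}'$. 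Naturality of $\pi^*$ yields the congruence
\[ c_k \theta^k \equiv \mu_k \theta^k \pmod{(\mu_1\theta, \ldots, \mu_{k-1}\theta^{k-1})} \quad \text{in } \Z[\theta]. \]

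The last step is to identify $c_k$ by choosing $\vec{v}' \in \C^{n-m}$ to consist of the $n-m$ roots of the polynomial $Q'(t) := 1 + s_1 t + \cdots + s_{n-m} t^{n-m}$. With this choice one has $\sigma_l(\vec{v}') = s_l$ for $l \le n-m$ and $\sigma_l(\vec{v}') = 0$ for $l > n-m$. A short generating-function calculation, comparing $[t^k](Q(t) Q'(t))$ with $\sigma_k(\vec{u})$ and using the recursive definition of the $s_l$, then shows that $\mu_j = 0$ for $j \le n-m$, while for $n-m+1 \le k \le n$ the value $\mu_k$ is exactly the bracketed expression in~\eqref{eq:modprel}.

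\textbf{Main obstacle.} The principal difficulty is that this privileged $\vec{v}'$ generally lies in $\C^{n-m}$ rather than $\Z^{n-m}$, so it does not define a legitimate $S^1$-action and cannot directly be substituted into the spectral-sequence comparison. The resolution is to treat the whole comparison as a polynomial identity in the elementary symmetric functions $e_l := \sigma_l(\vec{v}')$: since $\mu_1, \ldots, \mu_{n-m}$ depend triangularly on $e_1, \ldots, e_{n-m}$, the residue class of $\mu_k$ in the quotient $\Z[e_1, \ldots, e_{n-m}]/(\mu_1, \ldots, \mu_{k-1})$ is a well-defined polynomial in $\vec{u}$ and $\vec{v}$ alone, equal to the right-hand side of~\eqref{eq:modprel}. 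Combined with the naturality congruence and the independence of $c_k$ on $\vec{v}'$, this pins down $c_k$ as the claimed integer.
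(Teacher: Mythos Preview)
Your overall strategy---compare with $\Gl(n)=W(n,n)$ via an auxiliary graded space $V'$ and the equivariant projection $\pi\colon \Gl(n)\to W(n,m)$---is exactly the paper's, and you have correctly isolated the central obstacle: the privileged choice $\vec v'$ with $\sigma_l(\vec v')=s_l$ need not exist in $\Z^{n-m}$.

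The gap is in your proposed resolution of that obstacle. Naturality of $\pi^*$ on the $E_{2k}$-page yields only a \emph{congruence}
\[
  c_k \equiv \mu_k(\vec v') \pmod{I(\vec v')},
\]
where $I(\vec v')\subset \Z$ is the subgroup by which $E_{2k}^{2k,0}$ of the $\Gl(n)$ sequence has been quotiented; this subgroup contains $\mu_1(\vec v'),\ldots,\mu_{k-1}(\vec v')$. Your observation that $\mu_k$ reduces to the right-hand side of \eqref{eq:modprel} in $\Z[e_1,\ldots,e_{n-m}]/(\mu_1,\ldots,\mu_{k-1})$ is correct, and specialising it to any integer $\vec v'$ gives $c_k\equiv[\text{RHS}]\pmod{I(\vec v')}$. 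But this does not ``pin down'' $c_k$ unless you can produce $\vec v'$ for which $I(\vec v')$ is arbitrarily large---ideally $I(\vec v')=0$, which would require $\mu_j(\vec v')=0$ for all $j<k$, i.e.\ $\sigma_l(\vec v')=s_l$, precisely the obstacle you flagged. Treating the $e_l$ as free polynomial variables does not circumvent this: the spectral-sequence comparison is a topological statement requiring an honest $S^1$-action, hence integer weights, and cannot simply be evaluated at formal or complex $e_l$.

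The paper closes this gap by passing to $\Z/p$-coefficients. For any prime $p$ at which $x^{n-m}-s_1x^{n-m-1}+\cdots+(-1)^{n-m}s_{n-m}$ splits, one may choose integer $\vec v'$ with $\sigma_l(\vec v')\equiv s_l\pmod p$; then (using induction on $k$ for the range $n-m<j<k$) every $\mu_j$ vanishes mod $p$ for $j<k$, the $\Gl(n)$ sequence has $E_{2k}=E_2$ mod $p$, and the comparison gives $c_k\equiv[\text{RHS}]\pmod p$. A corollary of the Frobenius density theorem supplies infinitely many such primes, forcing $c_k=[\text{RHS}]$ over $\Z$. An alternative way to realise your polynomial-identity idea would be to run the comparison for the full $\Gl(n)\times\Gl(m)$-action on $W(n,m)$ \emph{before} specialising to $S^1$, so that your congruence is promoted to an equality in a genuine polynomial ring of Chern classes; but as written your sketch does not do this.
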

\begin{proof}
  Strictly speaking, the proof proceeds by induction on $\ell = k-(n-m+1)$, although most of the
  difficulty is already evident in the case $\ell = 0$. The arguments for the case $\ell = 0$ and
  for the induction step are very similar; we shall give both in parallel as much as possible.
  
  Given the form of the spectral sequence, and the fact that all $\alpha_i$ are transgressive, one
  certainly has $d_{2k}(\alpha_k) = C\theta^k$.  The argument in the proof will be to reduce to
  $\Z/p$-coefficients, then verify equation \eqref{eq:modprel} for infinitely many primes $p$.  The
  case of $\Z$ coefficients then follows.  \medskip

  Consider the integer polynomial
  \begin{equation*}
    f(x) = x^{n-m+1} - s_1x^{n-m} + s_2x^{n-m-1} + \dots + (-1)^{n-m-1}s_{n-m-1}x +  (-1)^{n-m}s_{n-m}
  \end{equation*}
  By the a corollary of the Frobenius density theorem, this polynomial splits over $\Z/p$ for
  infinitely many primes $p$. Let $\mathcal{P}$ denote the set of all such primes. We will first
  establish equation \eqref{eq:modprel} for all $p \in \mathcal{P}$.
  \smallskip

  To show the problem reduces well to prime coefficients, we need only observe that the natural map
  induced by $\Z \to \Z/p$ commutes with differentials, and since all the $\Z$-modules under
  consideration are free, on the $E_2$-page it takes the form of a map
  \begin{equation*}
    H^p(BS^1; H^q(W(n,m);\Z)) \to H^p(BS^1; H^q(W(n,m);\Z/p))
  \end{equation*}
  which is always surjective.

  Working modulo a particular prime, $p \in \mathcal{P}$, we write $d_{2k}(\alpha_k) = C\theta^k$,
  by abuse of notation, where all the terms are understood as the reduction mod $p$ of their
  integral analogues. By the choice of the prime $p$, we can find $\vec{v}' = (v'_1, \dots,
  v'_{n-m})$, roots of the polynomial $f(x)$ in $Z/p$. We chose particular integer representatives
  for the $v_i'$, denoting them by $v_i'$, again by abuse of notation. We note that this choice of
  $v_i'$ has been made so that $\sigma_i(\vec{v}') \equiv s_i \pmod{p}$. We work exclusively with
  $\Z/p$ coefficients from now on. We therefore have $\sigma_i(\vec{v}') = s_i$, which is key to the
  whole argument.

  Let $V'$ be a graded vector space with homogeneous basis elements in degrees $\vec{v}' = (v_1',
  \dots, v'_{n-m})$. Let $\pi$ denote the equivariant projection map $\pi: \Gl(n) \to W(n,m)$ we
  obtain by considering the former as surjective maps $U \to V \oplus V'$, and the latter as
  surjective maps $U \to V$. This map induces a map of Serre spectral sequences.

  For the purposes of computing with the $S^1$-action on $\Gl(n)$, it will be convenient to define
  $\vec{v}''$ as the concatenation of $\vec{v}$ and $\vec{v}'$, that is $(v_1, \dots, v_m, v_1',
  \dots, v_{n-m}')$. The spectral sequence arising from the $S^1$ action on $\Gl(n)$ has $E_2$ page
  \begin{equation*}
    H^*(BS^1; \Z/p) \tensor H^*(\Gl(n);\Z/p) = \Z/p[\theta] \tensor \EA_{\Z/p}(\alpha_1, \dots, \alpha_n)
  \end{equation*}
  the first nonvanishing differential is given by $d_{2j}(\alpha_j) = [\sigma_j(\vec{u}) -
  \sigma_j(\vec{v}'')] \theta^j$, where $j$ is the least positive integer such that
  $\sigma_j(\vec{u}) - \sigma_j(\vec{v}'') \neq 0$.

  Suppose for the sake of contradiction that this $j \le n-m$.
  \begin{equation*}
    d_{2j}(\alpha_j) = \sigma_j(\vec{u}) - \sigma_j(\vec{v}'') = \sigma_j(\vec{u}) - \sum_{i=1}^j
    \sigma_i(\vec{v}) \sigma_{j-i}(\vec{v}') = \sigma_j(\vec{u}) - \sum_{i=1}^j
    \sigma_i(\vec{v}) s_{j-i}
  \end{equation*}
  The term on the \textsc{rhs} is 0, by the definition of $s_{j-i}$, a contradiction.

  If we are in the case where $k > n-m+1$, that is, not the base case for the purpose of induction,
  we suppose for the sake of contradiction that $j < k$. Again we have
  \begin{equation*}
     d_{2j}(\alpha_j) = \sigma_j(\vec{u}) - \sigma_j(\vec{v}'') = \sigma_j(\vec{u}) - \sum_{i=1}^j
    \sigma_i(\vec{v}) \sigma_{j-i}(\vec{v}') = \sigma_j(\vec{u}) - \sum_{i=1}^j
    \sigma_i(\vec{v}) s_{j-i}
  \end{equation*}
  but here we know that, by applying the result to compute $d_{2j}(\alpha_j)$, that we have
  $d_{2j}(\alpha_j) = \sigma_j(\vec{u}) - \sum_{i=1}^j \sigma_i(\vec{v}) s_{j-i}$. Since
  $d_{2j}(\alpha_j)$ was assumed to be $0$ for $j$ in the range $n-m < j < k$, we have a
  contradiction.
  
  We return to considering both cases. The purpose of the inductive argument is to be able to assert
  that $d_{2j}(\alpha_j) = 0$ for $j< k$ in the spectral sequence
  \begin{equation*}
     H^*(BS^1; \Z/p) \tensor H^*(\Gl(n);\Z/p) \Rightarrow H^*(ES^1 \times_{S^1} \Gl(n,m);\Z/p).
  \end{equation*}
  This allows us to compute the differential $d_{2k}(\alpha_k)$ in this spectral sequence. It is
  \begin{equation*}
    d_{2k}(\alpha_k) = [\sigma_k(\vec{u}) - \sigma_k(\vec{v}'')]\theta^k  = \big[ \sigma_k(\vec{u})
    - \sum_{j=1}^k \sigma_j(\vec{v})\sigma_{k-j}(\vec{v}') \big] =  \big[ \sigma_k(\vec{u})
    - \sum_{j=1}^k \sigma_j(\vec{v})s_{k-j} \big]
  \end{equation*}
  
  We again consider the cohomology of $ES^1 \times_{S^1} W(n,m)$. From the comparison map
  \begin{equation*}
    \xymatrix{ H^*(BS^1; \Z/p) \tensor H^*(W(n,m);\Z/p) \ar@{=>}[rr] \ar^{\id \tensor \pi^*}[d] &&
      H^*(ES^1 \times_{S^1} W(n,m);\Z/p) \ar[d] \\
      H^*(BS^1; \Z/p) \tensor H^*(\Gl(n);\Z/p) \ar@{=>}[rr] &&  H^*(ES^1 \times_{S^1}
      \Gl(n);\Z/p) }
  \end{equation*}
  we obtain a commutative square 
  \begin{equation*}
    \xymatrix{ \alpha_k \ar@{|->}^{d_{2k}}[rr] \ar@{|->}[d] & & C\theta^k \ar@{|->}[d] \\ 
      \alpha_k \ar@{|->}^{d_{2k}}[rr] & &  \big[\sigma_k(\vec{u}) - \sum_{j=1}^k
      \sigma_j(\vec{v})s_{k-j}\big] \theta^k}
  \end{equation*}
  which proves that $C = \big[\sigma_k(\vec{u}) - \sum_{j=1}^k \sigma_j(\vec{v})s_{k-j}\big]$ in
  $\Z/p$, or equivalently that we have 
  \begin{equation*}
    p \mid \Big(C - \big[\sigma_k(\vec{u}) - \sum_{j=1}^k
  \sigma_j(\vec{v})s_{k-j}\big]\Big)
  \end{equation*}
  
  Since there are infinitely many $p \in \mathcal{P}$, and this relation holds for them all, it
  follows that 
  \begin{equation*}
    C  =\big[\sigma_k(\vec{u}) - \sum_{j=1}^k\sigma_j(\vec{v})s_{k-j}\big]
  \end{equation*}
  as required.
\end{proof}

Unfortunately this method of proof establishes only the first non-zero differential of the form
$d_{2k}(\alpha_k) = C\theta^k$, we cannot push it further to describe the subsequent
differentials. We conjecture that the pattern established in the theorem continues, that the
differential takes the form
\begin{equation*}
  d_{2k}(\alpha_k) = \big[\sigma_k(\vec{u}) - \sum_{j=1}^k \sigma_j(\vec{v})s_{k-j}\big] \theta^k
\end{equation*}
modulo the appropriate indeterminacy for all $k$.

\section{The General Rothenberg-Steenrod Spectral Sequence} 
This section is devoted to a construction of a spectral sequence which is well-known to homotopy
theorists, but seldom published in the form we need, we establish it here for want of a good
reference. It seems to be most properly called a spectral sequence of
Eilenberg-Moore-Rothenberg-Steenrod type, after \cite{ROTHENBERGSTEENROD}; we shall abbreviate and
refer to it as the Rothenberg-Steenrod spectral sequence.

We shall be working with simplicial spaces in this section, where appropriate (e.g.~for purposes of
computing singular homology) we shall tacitly replace such a simplicial space by its geometric
realization.

Let $G$ be a topological group. Let $X$, $Y$ be pointed spaces on which $G$ acts on the right and on
the left respectively. As in \cite{MAYCLASS}, one defines $B(X,G,Y)$ as the simplicial space whose
$n$-simplices are
\begin{equation*}
  Z_n(X,G,Y) = X \times\overbrace{G \times \dots \times G}^{\text{$n-1$-times}}\times Y.
\end{equation*}
The face maps in this simplicial space are given by combining successive pairs of spaces in $Z_n$
via the action maps $X \times G \to X$, $G \times Y \to Y$ and the multiplication map $G \times G
\to G$.

As in \cite{SEGAL}, for any simplicial space $A_\bullet$, one has spectral sequences (in
loc.~cit.~only the cohomology case is handled and that for semisimplicial spaces, but the case of
homology is similar and the necessary modifications for simplicial spaces not difficult)
\begin{align*}
  E^1_{p,q} &= H_q( A_p; R) \Rightarrow H_{p+q}(A_\bullet; R) \\
  E_1^{p,q} &= H^q( A_p; R) \Rightarrow H^{p+q}(A_\bullet; R)
\end{align*}
which are natural in $A_\bullet$. In our particular case, we have spectral sequences
\begin{align*}
  E^1_{p,q} &= H_q( Z_p(X,G,Y); R) \Rightarrow H_{p+q}(B(X,G,Y); R) \\
  E_1^{p,q} &= H^q( Z_p(X,G,Y); R) \Rightarrow H^{p+q}(B(X,G,Y);R)
\end{align*}

It is proved in loc.~cit.~that the $d_1$-differential in the spectral sequences $d_1:H^q(Z_p(X,G,Y);R) \to
H^p(Z_{p+1}(X,G,Y);R)$ is the alternating sum of the maps induced on homology by the face maps of the
simplicial space. The analogous result holds in homology.

We make the simplifying assumption that $H_*(G;R)$, $H_*(X;R)$ and $H_*(Y;R)$ are projective $R$-modules,
so that a K\"unneth isomorphism obtains
\begin{equation*}
  H_*(Z_q(X,G,Y);R) = H_*(X;R) \tensor_R H_*(G;R)^{\tensor q} \tensor_R H_*(Y;R)
\end{equation*}
and one also has universal coefficient isomorphisms for $X,Y$ and $G$
\begin{equation*}
  H^*(X;R) \isom \Hom( H_*(X;R), R)
\end{equation*}
and similarly for $Y$ and $G$. 

In this case, a homology class can be represented as a sum of terms of the form $\xi \tensor
\gamma_1 \tensor \dots \tensor \gamma_q \tensor \eta$, and one has
\begin{align*}
  d_1(\xi \tensor\gamma_1 \tensor \dots \tensor \gamma_q \tensor \eta) & = \xi\gamma_1 \tensor
  \gamma_2 \tensor \dots
  \tensor \gamma_n \tensor \eta \\
  &+\sum_{i=1}^q (-1)^i \xi \tensor \gamma_1 \tensor \dots \tensor \gamma_{i-1}\tensor
  \gamma_i \gamma_{i+1} \tensor \gamma_{i+2} \tensor \dots \tensor \gamma_q \tensor \eta \\
  &+ (-1)^{q+1} \xi \tensor \gamma_1 \tensor \dots \tensor \gamma_{q-1} \tensor \gamma_{q-1}\eta
\end{align*}
The differential in the case of cohomology is dual to the differential described above.
\medskip

We now concentrate on the case of a single space, $Y$, on which $G$ acts on the left.
\begin{prop}
  There is a convergent spectral sequence 
  \begin{equation*}
    E_2^{p,q} = \Ext^{p,q}_{H_*(G;R)}(H_*(Y;R), R) \Longrightarrow H^{p+q}(B(\pt, G,Y);R)
  \end{equation*}
  which is natural in both $Y$ and $G$, in the sense that if $\phi: G \to G'$ is a group homomorphism, and
  $f: Y \to Y'$ is a map from a $G$-space to a $G'$-space so that the following diagram commutes
  \begin{equation*}
    \xymatrix{ G \times Y \ar[r]\ar[d] & G' \times Y' \ar[d] \\ Y \ar[r] & Y'}
  \end{equation*}
  then there is a map of spectral sequences
  \begin{equation*}
   \xymatrix{ E_2^{p,q} = \Ext^{p,q}_{H_*(G';R)}(H_*(Y';R), R) \ar[d] \ar@{=>}[r] &  H^{p+q}(B(\pt,
     G',Y');R)  \ar[d] \\
       E_2^{p,q} = \Ext^{p,q}_{H_*(G;R)}(H_*(Y;R), R) \ar@{=>}[r] & H^{p+q}(B(\pt, G,Y);R)}
  \end{equation*}
\end{prop}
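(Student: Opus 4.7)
The plan is to feed the simplicial space $A_\bullet = B(\pt, G, Y)$ into the simplicial-space spectral sequence established earlier in the section, and then identify its $E_2$-page with $\Ext^{*,*}_{H_*(G;R)}(H_*(Y;R), R)$ by recognizing the $(E_1, d_1)$-complex as the cobar complex attached to the bar resolution of $H_*(Y;R)$.

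To begin, I would write out the cohomology spectral sequence of $A_\bullet$, obtaining
\begin{equation*}
E_1^{p,q} = H^q(Z_p(\pt, G, Y); R) \Longrightarrow H^{p+q}(B(\pt, G, Y); R),
\end{equation*}
with $d_1$ the alternating $R$-linear dual of the face-induced maps. Writing $A = H_*(G;R)$ and $M = H_*(Y;R)$, the projectivity hypothesis together with K\"unneth and universal coefficients identifies
\begin{equation*}
E_1^{p,q} \cong \Hom_R\bigl(A^{\otimes p} \otimes_R M,\ R\bigr)_q,
\end{equation*}
the subscript $q$ denoting the internal graded degree.

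Next I would specialize the $d_1$-formula recorded earlier to $X = \pt$. The action of $G$ on the point is trivial, so the induced map $A \otimes_R R \to R$ is the augmentation $\epsilon: A \to R$; after correcting the evident typo in the last summand, the formula becomes the standard bar differential
\begin{align*}
d_1(1 \otimes \gamma_1 \otimes \cdots \otimes \gamma_p \otimes \eta)
={}& \epsilon(\gamma_1)\,\gamma_2 \otimes \cdots \otimes \gamma_p \otimes \eta \\
 &+ \sum_{i=1}^{p-1}(-1)^i\, 1 \otimes \cdots \otimes \gamma_i\gamma_{i+1} \otimes \cdots \otimes \eta \\
 &+ (-1)^p\, 1 \otimes \gamma_1 \otimes \cdots \otimes \gamma_{p-1} \otimes (\gamma_p\cdot\eta)
\end{align*}
on the bar complex $B_\bullet(R, A, M) = R \otimes_R A^{\otimes \bullet} \otimes_R M$. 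The cohomological $d_1$ is the $R$-linear dual of this map, so $(E_1^{*,*}, d_1)$ is precisely $\Hom_R(B_\bullet(R, A, M), R)$ equipped with the cobar differential.

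To identify the $E_2$-page with $\Ext$, I would invoke the standard fact that $B_\bullet(A, A, M) \to M$, with $B_p(A, A, M) = A \otimes_R A^{\otimes p} \otimes_R M$, is a free resolution of $M$ as a left $A$-module (a contracting homotopy is given by insertion of the unit of $A$). The free-module adjunction $\Hom_A(A \otimes_R X, R) \cong \Hom_R(X, R)$ then yields
\begin{equation*}
\Hom_A\bigl(B_\bullet(A, A, M),\, R\bigr) \cong \Hom_R\bigl(A^{\otimes \bullet} \otimes_R M,\, R\bigr),
\end{equation*}
the right-hand side being exactly our $(E_1, d_1)$. Taking cohomology gives $E_2^{p,q} = \Ext^{p,q}_{A}(M, R)$, with $p$ the resolution degree and $q$ the internal degree, as claimed. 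Convergence is inherited from that of the simplicial spectral sequence, and naturality is automatic: a pair $(\phi, f)$ as in the statement produces a simplicial map $B(\pt, G, Y) \to B(\pt, G', Y')$, hence a map of spectral sequences, and the $E_2$-identification is manifestly functorial in the data $(A, M)$. The main bookkeeping obstacle is simply matching sign conventions so that the alternating face-map differential of the simplicial spectral sequence agrees on the nose with the cobar differential; once that is done the identification of $E_2$ with $\Ext$ is formal.
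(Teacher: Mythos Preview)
Your proposal is correct and follows essentially the same route as the paper: both feed $B(\pt,G,Y)$ into the simplicial spectral sequence, use K\"unneth and universal coefficients to write $E_1^{p,*}\cong\Hom_R(A^{\otimes p}\otimes_R M,R)$, invoke the free-module adjunction $\Hom_A(A\otimes_R X,R)\cong\Hom_R(X,R)$ to recognize this as $\Hom_A$ applied to the bar resolution $B_\bullet(A,A,M)$, and conclude $E_2=\Ext$. The paper routes the adjunction through the auxiliary simplicial space $B(G,G,Y)$ rather than stating it abstractly, but the underlying algebra is identical.
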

\begin{proof}
  The sequence is exactly that described above, for the cohomology of $B(\pt,G,Y)$. The key is the
  identification of the $E_2$-page.
  
  For convenience, we denote the $R$-algebra $H_*(G;R)$ by $S$.  We know that the $E_2$ page is the
  homology of a cocomplex
  \begin{equation} \label{eq:cc}
    \xymatrix{\ar[r] &  H^*(Z_p(\pt,G,Y);R)\ar[r] & H^*(Z_{p+1}(\pt,G,Y);R) \ar[r] & }
  \end{equation}
  We have
  \begin{equation*} \begin{split}
    H^*(Z_p; R) = \Hom_R( S^{\tensor p} \tensor_R H_*(Y;R), R) \\ \isom \Hom_S( S \tensor_R S^p
    \tensor_R H_*(Y;R), R) \isom \Hom_S(H_*(Z_p(G,G,Y);R),R) \end{split}
  \end{equation*}
  The group $H_*(Z_p(G,G,Y);R)$ is the $p$-th term in the $E_1$-page of the spectral sequence
  associated with $B(G,G,Y)$. A routine but messy argument regarding differentials allows us to
  identify the cocomplex in \eqref{eq:cc} with the result of applying the functor $\Hom_S(\cdot, R)$
  to the complex
  \begin{equation} \label{eq:c} \begin{split} \xymatrix{\ar[r] & S^{\tensor{p+2}} \tensor_R H_*(Y;R) \isom
      H_*(Z_{p+1}(G,G,Y);R)\ar[r] & } \\ \xymatrix{ & S^{\tensor{p+1}} \tensor_R H_*(Y;R)\isom H_*(Z_{p}(G,G,Y);R) \ar[r] &
    } \end{split}
  \end{equation}
  This complex is exactly the algebraic bar resolution of $H_*(Y;R)$ as an $S$-module over the ring
  $R$, which is, as the name suggests is a resolution of $H_*(Y;R)$, \cite{WEIBEL}. It follows that
  the $E_2$-page of the spectral sequence, viz.~the homology of \eqref{eq:cc}, is exactly as
  claimed.

  The convergence of the spectral sequence goes by the book, \cite{BOARDMAN}, since it is
  concentrated in one quadrant of the plane.

  We remark on the naturality of the sequence. In the first place, given a map of $G$-spaces, $Y \to
  Y'$, we have a map $B(\pt, G, Y) \to B(\pt, G, Y')$, so we have naturality in $Y$. In the second,
  suppose we have a group homomorphism $G' \to G$, and a space $Y$ with a $G$ action, then we have
  an induced $G'$ action on $Y$ and a map $B(\pt, G',Y) \to B(\pt, G,Y)$. In both cases we obtain
  maps of spectral sequences, and composing these maps yields exactly the naturality claimed in the
  proposition. We remark that the maps on $E_2$-pages obtained in this way are exactly the maps
  \begin{equation*}
    \xymatrix{ \Ext^{p,q}_{H_*(G';R)}(H_*(Y';R), R) \ar[r] & \Ext^{p,q}_{H_*(G;R)}(H_*(Y;R), R)}
  \end{equation*}
  one obtains by functoriality of $\Ext$ with respect to the maps $H_*(Y;R) \to H_*(Y';R)$ and
  $H_*(G;R) \to H_*(G';R)$.
\end{proof}

One case of particular importance is the following

\begin{cor} \label{c:RSSS}
  Suppose $X$ is a space with a free $G$-action on the left, then there is a spectral sequence
  \begin{equation}
    E_2^{p,q} = \Ext^{p,q}_{H_*(G;R)}(H_*(X;R), R) \Longrightarrow H^{p+q}(X/G;R)
  \end{equation}
\end{cor}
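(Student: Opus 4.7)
The plan is simply to specialize the preceding proposition and identify its target. Taking $Y = X$ in the proposition, we obtain a convergent spectral sequence
\[
E_2^{p,q} = \Ext^{p,q}_{H_*(G;R)}(H_*(X;R), R) \Longrightarrow H^{p+q}(B(\pt,G,X);R),
\]
so the entire content of the corollary reduces to the homotopy equivalence $B(\pt,G,X) \weq X/G$ for free $G$-actions.

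To establish this equivalence, first I would recall from \cite{MAYCLASS} the standard identification $|B(\pt,G,X)| \weq EG \times_G X$, where $EG$ may be taken to be the realization $|B(G,G,\pt)|$, May's contractible model for the total space of the universal $G$-bundle. Next, there is a natural simplicial augmentation $B(\pt,G,X) \to X/G$ onto the constant simplicial space, sending a simplex $(g_1, \dots, g_r, x)$ to the orbit class $[x]$; this commutes with the face maps because multiplication in $G$, the left action of $G$ on $X$, and the projection $\pt \times G \to \pt$ all preserve the orbit. On realizations this augmentation agrees with the map $EG \times_G X \to X/G$ induced by $EG \to \pt$.

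When $G$ acts freely on $X$, the quotient $X \to X/G$ is a principal $G$-bundle, and hence $EG \times_G X \to X/G$ is the associated bundle with fiber $EG$; since $EG$ is contractible, this map is a weak equivalence. This yields $H^*(B(\pt,G,X);R) \isom H^*(X/G;R)$, completing the identification of the abutment. The only real work, already carried out in the proposition, is the algebraic identification of the $E_2$-page; the remaining effort is purely homotopy-theoretic bookkeeping, which presents no further obstacle given the running projectivity hypotheses on $H_*(G;R)$ and $H_*(X;R)$.
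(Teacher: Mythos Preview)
Your proof is correct and follows essentially the same approach as the paper: specialize the proposition with $Y=X$ and then invoke the weak equivalence $B(\pt,G,X)\weq X/G$ for free actions. The paper simply asserts this weak equivalence in one line, whereas you supply the standard justification via $|B(\pt,G,X)|\weq EG\times_G X$ and contractibility of the fiber $EG$; there is no substantive difference in strategy.
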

\begin{proof}
  Under the assumption of a free $G$-action, there is a weak equivalence $X/G \weq B(\pt, X, G)$,
  and the spectral sequence follows.
\end{proof}

\section{The Cohomology of Spaces of Long Exact Sequences}

\subsection{Presentation as Homogeneous Spaces}
We consider long exact sequences of graded $\C$-vector spaces, for instance:
\begin{equation*}
  \xymatrix{ 0 \ar[r] & A_1 \ar^{d_1}[r] & B_1 \ar^{e_1}[r] & A_2 \ar^{d_2}[r] & \dots \ar^{e_{n-1}}[r] & A_n
    \ar^{d_n}[r] & B_n \ar[r] & 0}
\end{equation*}
since there are $2n$ terms to this sequence, we designate this as the \textit{even\/} case, the odd
case will be that where the last term is $A_n$. We shall treat mainly the even case in what follows,
the odd case is generally much the same and we shall try to spare the reader by proving each result
only once.

Our initial treatment does not involve the grading, and is equally true of the ungraded case. Later,
the grading will play a meaningful role.

We define $a_i = \dim_k A_i$ and $b_i = \dim_k B_i$. When the grading is not important, we denote
the space of such sequences by $X(a_1, \dots, a_n, b_1, \dots, b_n)$. 

The spaces $A_i$, $B_i$ can each be decomposed into graded parts. Write $\C(n)$ for the vector space
$\C$, placed in degree $n$. We write $A_i \isom \oplus_{j=1}^{a_n}\C(v_{i,j})$, with $v_{i,j} \le
v_{i,j+1}$. The integers $v_{i,j}$ encapsulate all the grading information of the $A_i$. We make an
equivalent definition of integers $w_{i,j}$ for the $B_i$. We will occasionally write
\begin{equation*} \begin{split}
  X(v_{1,1}, \dots, v_{1,a_1}; v_{2,1}, \dots, v_{2,a_2}; \dots; v_{n,1}, \dots, v_{n,a_n}; \\ w_{1,1},
  \dots, w_{1,b_1}; w_{2,1}, \dots, a_{2,b_2}; \dots ; w_{n,1}, \dots, w_{n,b_n}) \end{split}
\end{equation*}
in place of $X(a_1, \dots, a_n, b_1, \dots, b_n)$ when the grading is important.

In the interests of concreteness, we fix a basis of homogeneous elements for each $A_i$ and each
$B_i$, and equip each space with a complex inner-product with respect to which the given basis is
orthonormal.

The space of long exact sequences is now identified with the space of matrices representing the maps
$d_i$, $e_i$. The space of automorphisms
\begin{equation*}
  G = \Gl(A_1) \times \Gl(B_1) \times \Gl(A_2) \times \Gl(B_2) \times \dots \times  \Gl(A_n) \times \Gl(B_n)
\end{equation*}
has a left-action on the space of such seqences by 
\begin{equation} \label{eq:XasGK}  \begin{split}
  (a_1, b_1, \dots, a_n, b_n)\cdot (d_1, e_1, d_2, e_2, \dots, d_n, e_n) \\= ( b_1d_1a_1^{-1},
  a_2e_1b_1^{-1}, \dots,a_ne_{n-1}b_{n-1}^{-1},  b_nd_na_n^{-1}) \end{split}
\end{equation}
This left-action is readily seen to be transitive. 
We will describe the space $X$ as a set of left cosets of $G$, and use this description to calculate the
cohomology of $X$. To do this, we fix notation

We write the homology of $\Gl(A_i)$ as $\EA_R(\d\alpha_{i,1}, \dots, \d\alpha_{i,a_i})$ and that of
$\Gl(B_i)$ as $\EA_R(\d\beta_{i,1}, \dots, \d\beta_{i,b_i})$. We then have
\begin{equation} \label{presHG} 
  H_*(G;R) = \prod_{i=1}^n \EA_R(\d\alpha_{i,1}, \dots, \d\alpha_{i,a_i}) \times \prod_{i=1}^n
  \EA_R(\d\beta_{i,1},\dots, \d\beta_{i,b_i}).
\end{equation}
At times it is more convenient to distinguish the two families of spaces $A_i$ and $B_i$, and at
other times it is more convenient to view them as being of a kind. We will occasionally therefore
use the notation
\begin{equation*}
  \alpha_{i,j} = \gamma_{2i-1,j}, \quad \beta_{i,j} = \gamma_{2i,j}, \quad a_i = c_{2i-1}, \quad b_i
  = c_{2j}.
\end{equation*}

In considering $X$ as a homogeneous $G$-space, the stabilizer of a point can be computed without too
much difficulty. Denote by $\{x_{i,j}\}$ the $j$-th element in our basis for $A_i$, and by
$\{y_{i,j}\}$ the $j$-th element in our basis for $B_i$, and by $r_i$, $s_i$ the ranks of $d_i$,
$e_i$ respectively. Consider the sequence $\xi_0$ for which
\begin{align}
  \label{seqxi0}
  d_i(x_{i,j}) &= y_{i,j}, \quad \text{ for $1 \le j \le r_i$}, \qquad d_i(x_{i,j}) = 0,
  \quad \text{ for $j> r_i$} \\
  e_i(y_{i,b_i-j}) &= x_{i+1, a_{i+1}-j}, \quad \text{ for $1 \le j \le s_i$}, \qquad
  e_i(y_{i,b_i-j})= 0 \quad \text{ for $j > s_i$}
\end{align}
We take this sequence as an origin for the homogeneous space $X$ whenever we require such a point.

Supposing  $(a_1, b_1, \dots, a_n, b_n)$ fixes this sequence, then we have, in particular, the
equations $b_i d_i a_i^{-1} = d_i$ and $a_{i+1}e_ib_i^{-1}$. In coordinates
\begin{equation*}
  b_i
  \begin{pmatrix}
    I_{r_i} & 0 \\ 0 & 0 
  \end{pmatrix}
  a_i^{-1} =   \begin{pmatrix}
    I_{r_i} & 0 \\ 0 & 0 
  \end{pmatrix}, \quad  a_{i+1}
  \begin{pmatrix}
    I_{s_i} & 0 \\ 0 & 0 
  \end{pmatrix}
  b_i^{-1} =   \begin{pmatrix}
    0 & 0 \\ 0 & I_{s_i}
  \end{pmatrix}
\end{equation*}
From these equations it follows that the matrices $a_i$ and $b_i$ decompose as
\begin{equation*}
  b_i =
  \begin{pmatrix}
    f_i & * \\ 0 & g_i
  \end{pmatrix}, \quad  a_i = \begin{pmatrix}
    f_i & * \\ 0 & g_{i-1}
\end{pmatrix}
\end{equation*}

We see that the stabiliser of an arbitrary element is therefore a group having the homotopy type of
\begin{equation*}
  K= \Gl(r_1) \times \Gl(s_1) \times \Gl(r_2) \times \Gl(s_2) \times \dots \times \Gl(s_{n-1}) \times \Gl(r_n)
\end{equation*}
We write the homology of this space as
\begin{equation*}
  H_*(K;R) = \prod_{i=1}^n \EA_R(\d \rho_{i,1}, \dots, \d\rho_{i,r_i}) \times \prod_{i=1}^{n-1}
  \EA_R(\d \sigma_{i,1}, \d\sigma_{i,2}, \dots, \d \sigma_{i,s_i})
\end{equation*}
but again, in keeping with our dual understanding of $A_i$, $B_i$ we will write
\begin{equation*}
  \d \rho_{i,j} = \d \tau_{2i-1,j}, \quad \d \sigma_{i,j} = \d \tau_{2i,j}, \quad r_i = t_{2i-1},
  \quad s_i = t_{2i}
\end{equation*}
we shall also use the convention $t_0=0$, so that for all $c_i$ there is a corresponding $t_{i-1}$.

\subsection{The Non-Equivariant Cohomology}

The inclusion of $K$ in $G$ induces the following map on homology
\begin{equation} \label{HGasHK} 
  \xymatrix{ H_*(K;R) \ar^{\iota_*}[r] \ar@{=}[d] &  H_*( G;R) \ar@{=}[d] \\
    \prod_{i=1}^{2n} \EA_{R}(\d\tau_{i,1}, \dots, \d\tau_{i,t_i}) \ar^{\iota_*}[r] &
    \prod_{i=1}^{2n} \EA_R(\d\gamma_{i,1},  \dots, \d\gamma_{i,c_i}) \\
   \iota_*( \d\tau_{i,j}) \ar@{=}[r] & \d\gamma_{i-1,j} + \d\gamma_{i,j}}
\end{equation}
which is a map of Hopf algebras, in particular of rings. By a change of coordinates, replacing
$\d\gamma_{i,j}$ by $\iota_*(\d\tau_{i,j})$ in our presentation of $H_*(G;R)$ \eqref{presHG}, we see
that $H_*(G;R)$ is itself an exterior algebra over $H_*(K;R)$. We find a set $\d N$ so that
$H_*(G;R) = \EA_{H_*(K;R)}(\d N)$.

Let $i$ be an integer satisfying $1 \le i \le 2n$. Suppose $j$ is an integer satisfying $t_{i-1}
< j \le c_i$. We define $\ell$ to be the least integer $i \le \ell$ such that $t_\ell < j$
\begin{equation} \label{eq:Nu} 
  \kappa_{i,j} = \sum_{k=i}^\ell (-1)^{k-1}\gamma_{k,j}.
\end{equation}

We denote the set of all $\kappa_{i,j}$ by $N(X)$, or by $N$ when the dependence on $X$ is clear. By $\d N$, we mean the set of duals of $N$, taken with respect to the evident basis of $H_*(G;R) = \EA_R(\{\gamma_{i,j}\})$.

A pictorial description of the $\kappa_{i,j}$ is perhaps of use. Take for example the case where $(c_1, c_2, c_3, c_4,c_5,c_6) = (1,4,5,4,3,1)$ and $(t_1, t_2,t_3,t_4,t_5) = (1,3,2,2,1)$. Pictorially we denote this as
\begin{equation} \label{eq:exofN} 
  \xymatrix{ & &  \gamma_{3,5} & & & \\
    &  \gamma_{2,4} & \gamma_{3,4} &  \gamma_{4,4}&  \\
    & \gamma_{2,3} \ar@{-}[r] & \gamma_{3,3} & \gamma_{4,3} & \gamma_{5,3}\\ 
     & \gamma_{2,2} \ar@{-}[r] & \gamma_{3,2} \ar@{-}[r] &  \gamma_{4,2} \ar@{-}[r] & \gamma_{5,3}\\
    \gamma_{1,1} \ar@{-}[r]  & \gamma_{2,1} \ar@{-}[r] & \gamma_{3,1} \ar@{-}[r] &
    \gamma_{4,1} \ar@{-}[r]& \gamma_{5,1} \ar@{-}[r] & \gamma_{6,1} }
\end{equation}
the horizontal lines being numbered by the $t_i$. In this case one has
\begin{align*}
  \kappa_{1,1} &= \gamma_{1,1} + \gamma_{2,1} +  \gamma_{3,1} + \gamma_{4,1} +
  \gamma_{5,1}+ \gamma_{6,1}\\
  \kappa_{2,2} &= \gamma_{2,2} +  \gamma_{3,2} +  \gamma_{4,2} + \gamma_{5,2} \\
  \kappa_{2,3} &= \gamma_{2,3} +  \gamma_{3,3} \\
  \kappa_{i,j} &= \gamma_{i,j} \quad \text{ if $(i,j)$ is any of $(4,3)$, $(5,3)$, $(2,4)$,
    $(3,4)$, $(4,4)$ or $(5,3)$}
\end{align*}

\begin{prop}
  In the notation of this section, the cohomology of \[X= X(a_1, \dots, a_n, b_1, \dots, b_n)\] is
  \begin{equation*}
    \Hom_{H_*(K;R)}(H_*(G;R),R) 
  \end{equation*}
  The map $G \to G/K = X$ induces the evident injective map
  \begin{equation*}
    \xymatrix{ H^*(X;R) \isom \Hom_{H_*(K;R)}(H_*(G;R), R)  \ar@{^{(}->}[r] & \Hom_{R}(H_*(G;R),R)
      \isom H^*(G;R)} 
  \end{equation*}
  and we can identify $H^*(X;R) = \EA_R(N) \subset H^*(G;R)$.
\end{prop}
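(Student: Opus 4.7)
The plan is to identify $X$ with the homogeneous space $G/K$ and apply the Rothenberg-Steenrod spectral sequence of Corollary \ref{c:RSSS}, then show that $H_*(G;R)$ is a free $H_*(K;R)$-module so that this spectral sequence collapses onto the column $p=0$.

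The transitive left $G$-action of \eqref{eq:XasGK} with stabiliser $K$ at the basepoint $\xi_0$ of \eqref{seqxi0} gives $X \isom G/K$. Since $K$ is a closed subgroup of the Lie group $G$, it acts freely on $G$, so Corollary \ref{c:RSSS} supplies a convergent spectral sequence
\begin{equation*}
E_2^{p,q} = \Ext^{p,q}_{H_*(K;R)}(H_*(G;R), R) \Longrightarrow H^{p+q}(X;R).
\end{equation*}

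The main step, and the principal obstacle, is to show that $H_*(G;R)$ is free as a module over $\iota_*(H_*(K;R))$ under the Pontryagin product. From \eqref{HGasHK} we have $\iota_*(\d\tau_{i,j}) = \d\gamma_{i-1,j} + \d\gamma_{i,j}$, and since $H_*(G;R) = \EA_R(\{\d\gamma_{i,j}\})$ is an exterior algebra on these primitive generators, one can perform a triangular change of generators. Working column-by-column in the staircase of \eqref{eq:exofN}, for each $j$ one successively replaces $\d\gamma_{i,j}$ by $\iota_*(\d\tau_{i,j}) = \d\gamma_{i-1,j} + \d\gamma_{i,j}$ for every $i$ having $j \le t_{i-1}$, using the already-processed $\d\gamma_{i-1,j}$ as the subtraction; the elements of $\d N$, defined as the duals of the classes $\kappa_{i,j}$ in \eqref{eq:Nu}, are precisely the complementary exterior-algebra generators, and the alternating-sum form of \eqref{eq:Nu} is dictated by the inverse of this change of basis. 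The upshot is that
\begin{equation*}
H_*(G;R) \isom \iota_*(H_*(K;R)) \tensor_R \EA_R(\d N)
\end{equation*}
as $H_*(K;R)$-modules, so $H_*(G;R)$ is free on the monomials in $\d N$.

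Freeness forces $\Ext^{p,q}_{H_*(K;R)}(H_*(G;R), R) = 0$ for $p > 0$, so the spectral sequence collapses to yield
\begin{equation*}
H^*(X;R) \isom \Hom_{H_*(K;R)}(H_*(G;R), R) \isom \EA_R(N),
\end{equation*}
the second isomorphism obtained by dualising the $\EA_R(\d N)$-factor. For the statement about the induced map $H^*(X;R) \to H^*(G;R)$, I apply the naturality of Corollary \ref{c:RSSS} to the inclusion of the trivial subgroup $\{e\} \into K$ acting on $G$: on the $E_2 = E_\infty$ page the resulting map becomes the forgetful inclusion $\Hom_{H_*(K;R)}(H_*(G;R), R) \into \Hom_R(H_*(G;R), R) = H^*(G;R)$, which under the identification above is the desired inclusion $\EA_R(N) \subset H^*(G;R)$.
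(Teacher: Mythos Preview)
Your proof is correct and follows essentially the same route as the paper: apply the Rothenberg--Steenrod spectral sequence of Corollary~\ref{c:RSSS}, use the change of basis \eqref{HGasHK} to see that $H_*(G;R)$ is free over $H_*(K;R)$ so the sequence collapses at $E_2$, and invoke naturality with respect to $\{e\}\hookrightarrow K$ for the inclusion into $H^*(G;R)$. The only difference is cosmetic: for the identification $H^*(X;R)=\EA_R(N)$ you dualise the free-module decomposition directly, whereas the paper checks that each $\kappa_{i,j}$ is $H_*(K;R)$-linear and then matches ranks by a dimension count over fields.
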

\begin{proof}
  Since $X$ is the homogeneous space of cosets of $G$ by $K$, we have a spectral sequence as in
  corollary \ref{c:RSSS}, which on the $E_2$-page is $\Ext^{p,q}_{H_*(K;R)}(H_*(G;R),R)$. The
  $H_*(K;R)$-module structure of $H_*(G;R)$ is given by \eqref{HGasHK}, $H_*(G;R)$ is an exterior
  algebra over $H_*(K;R)$, in particular it is a free $H_*(K;R)$-module. The $E_2$-page is
  concentrated in the column $p=0$, and it collapses thereafter. We therefore have $H^*(X;R) =
  \Hom_{H_*(K;R)}(H_*(G;R),R)$, which is the first assertion. 
  
  We can calculate the comparison map $H_*(G/K;R) \to H_*(G;R)$ by taking $G$ to be $G/\{e\}$, the
  trivial quotient, and using the naturality of the Rothenberg-Steenrod spectral sequence. The map
  is evidently an injection for algebraic reasons.

  To prove the last assertion, that $H^*(X;R) = \EA_R(N) \subset H^*(G;R)$, we first remark that since the
  $\kappa_{i,j}$ are $R$-independent and satisfy $\kappa_{i.j}^2=0$, the subalgebra of $H^*(G;R)$ they
  generate is exactly $\EA_R(N)$.

  Secondly, it is easily verified that the $\kappa_{i,j}$ are in fact $H_*(K;R)$-linear, so we
  certainly have an inclusion $\EA_{R}(N) \subset H^*(X;R)$. To prove the equality, we first observe
  that it suffices to deal with the case $R=\Z$, since the case of general $R$ can be obtained by
  base change.

  Consider therefore the short exact sequence
  \begin{equation*}
    \xymatrix{ 0 \ar[r] &\EA_\Z(N) \ar[r] & H^*(X;\Z) \ar[r] & Q \ar[r] & 0}
  \end{equation*}
  It suffices to show $Q=0$. Since $Q$ is a finitely generated abelian group, we need only show $Q
  \tensor \Q = 0$ and $Q \tensor \Z/p = 0$ for all $0$. After changing base to any field, $k$, we have
  \begin{equation*}
     \xymatrix{ 0 \ar[r] &\EA_k(N) \ar[r] & H^*(X;k) \ar[r] & Q \tensor_\Z k \ar[r] & 0}
  \end{equation*}
  We show that $Q \tensor_\Z k$ is $0$ by counting dimensions. In the first place we have $\dim_k
  \EA_k(N) = 2^{|N|}$. In the second we have, writing $S$ for $H_*(K;k)$ for brevity
  \begin{equation*} \begin{split}
    \dim_k(H^*(X;k) = \dim_k \Hom_{S}(H_*(G;k), k) = \dim_k \Hom_{S}(\EA_{S}(\d N), k)
    = \\ \dim_k \Hom_{S}(S^{2^{|\d N|}},k) = 2^{|\d N|} \end{split}
  \end{equation*}
  It follows that $Q \tensor_\Z k= 0$, as claimed.
\end{proof}

\subsection{The $S^1$-action}

There is an $S^1$-action on $G=\Gl(A_1) \times \Gl(B_1) \times \dots \times \Gl(A_n) \times
\Gl(B_n)$, since these vector spaces are graded and we can give $\Gl(A_i)$ an action of $S^1$ on the
left, following section \ref{SecGRVS}. This gives rise to an action of $S^1$ on $G/K = X$, which is
of central importance. To give explicit formulas, we take $\xi \in X$ to be of given by $g\xi_0$,
where $\xi_0$ is as defined in equation \eqref{seqxi0}, and $g$ is determined only up to
indeterminacy by $K$. For the sake of concreteness, take $d_1: A_1 \to B_1$, one of the
differentials in the sequence $x$. We can write this as $b_1 d_1^0a_1^{-1} = d_1$, where $a_1, b_1$
are terms in $g$. The $S^1$ action on $d_1$ is therefore given by 
\begin{equation*} \begin{split}
  z \cdot d_1 = (z\cdot b_1) d_1^0 (z \cdot a_1)^{-1} = \\ \begin{pmatrix}
    z^{w_{1,1}} & & & \\ & z^{w_{1,2}} & & \\ & & \ddots & \\ & & & z^{w_{1,b_1}}
  \end{pmatrix} d_1
  \begin{pmatrix}
    z^{-v_{1,1}} & & & \\ & z^{-v_{1,2}} & & \\ & &  \ddots & \\ & & & z^{-v_{1,a_1}}
  \end{pmatrix} \end{split}
\end{equation*}
viz.~it is an action on the left \& right as described in section \ref{SecGRVS}.

\section{Comparison Maps of Spaces of Exact Sequences}
\subsection{Comparison By Folding}
\label{ss:CompByFold}

We begin with a sequence of graded complex vector spaces
\begin{equation} \label{eq:theseqinX} 
  \xymatrix{ 0 \ar[r] & A_1 \ar^{d_1}[r] & B_1 \ar^{e_1}[r] & A_2 \ar^{d_2}[r] & \dots \ar[r]&
    B_{n-1} \ar^{e_{n-1}}[r] & A_n
    \ar^{d_n}[r] & B_n \ar[r] & 0}
\end{equation}
We continue to assume that each vector space is equipped with an inner product and an orthonormal
basis of homogeneous elements, this makes it possible to identify $\d{A}_i$ and $A_i$ as graded
vector spaces, as in section \ref{SecGRVS}, and so obtain a complex
\begin{equation*}
  \xymatrix{ 0 \ar[r] & B_1 \ar^{e_1\oplus \d d_1}[rr] & &
    A_1 \oplus A_2 \ar^{0 \oplus d_2}[r] & B_2 \ar[r] & \dots \ar[r] & A_n \ar^{d_n}[r]  & B_n \ar[r] & 0}
\end{equation*}
The map $\d{d}_1$ is injective into $A_1 \isom \d A_1$. If $e_1(b) = 0$, then $b \in \im d_1$, by
exactness, and we can write $b= d_1a$. We have $\d d_1 d_1 a \neq 0$, by nondegeneracy. It follows that
$\ker(e_{1}) \cap \ker(\d d_1) = \{0\}$, and so $e_{n-1} \oplus \d d_n$ is an injective map. The
complex is consequently an exact sequence in $X(a_1, \dots, a_n, b_1, \dots, b_{n-1}+b_n)$, and we
have a map
\begin{equation*}
  \psi:X = X(a_1, \dots, a_n, b_1, \dots, b_n) \to X(a_1, \dots, a_n, b_1, \dots, b_{n-1}+b_n) = X'
\end{equation*}
From the graded point of view, the latter space above is
\begin{equation*} \begin{split} X(\overbrace{v_{1,1}, \dots, v_{1,a_1}, v_{2,1}, \dots,
      v_{2,a_2}}^{\text{one vector of gradings}}; \dots; v_{n,1}, \dots, v_{n,a_n}; w_{1,1}, \dots,
    w_{1,b_1}; w_{2,1}, \dots, a_{2,b_2}; \dots ;\\ w_{n-1,1}, \dots, w_{n-1,b_{n-1}}; w_{n,1},
    \dots, w_{n,b_n})
  \end{split}
\end{equation*}
and the map $\psi$ is $S^1$-equivariant map by the same argument used in proposition \ref{foldingequivprec}.

There is a map $\phi: G = \Gl(A_1, B_1, \dots, B_{n-1}, A_n, B_n) \to G'= \Gl(A_1, B_1, \dots, B_{n-1} \oplus \d
B_n , A_n)$, which restricts to the composition $\Gl(B_{n-1}) \times \Gl(B_n) \to \Gl(B_{n-1})
\times \Gl(\d B_n) \to \Gl(B_{n-1} \oplus \d B_n)$. This map lifts $\psi$
\begin{equation*}
  \xymatrix{ K \ar[r] \ar@{-->}[d] &G \ar[r] \ar[d] & X(a_1, \dots, a_n, b_1, \dots, b_n) \ar^\psi[d] \\
    K' \ar[r] &G' \ar[r] & X(a_1, \dots, a_n, b_1,\dots,b_{n-1} + b_n)}
\end{equation*}
the image of the isotropy subgroup $K$ in $G'$ lies in $K'$, so one has a map of group actions on
spaces
\begin{equation*}
  \xymatrix{ G \times K \ar[r] \ar[d] & G' \times K' \ar[d] \\ G \ar[r] & G'}
\end{equation*}
by naturality in proposition \ref{c:RSSS} we have a comparison map of spectral sequences, which allows
us to compute the map $\psi^*$ on cohomology.

\begin{prop} \label{p:cbyfolding}
 We adopt the notation of this section so far. Writing
\begin{equation*}
  H^*(X;\Z)= \EA_\Z(N(X)) \quad \text{ and } H^*(X';\Z) = \EA_\Z(N(X'))
\end{equation*}
where $N(X)$ is as defined after equation \eqref{eq:Nu}. We write $N = N(X)$ and $N'=N(X')$. The map
\begin{equation*}
  \psi^*: \EA_\Z(N') \to \EA_\Z(N) 
\end{equation*}
is defined by 
\begin{equation*}
  \kappa_{1,j} \mapsto \kappa_{1,j} + \kappa_{2,j} + \kappa_{3,j}, \qquad \kappa_{i,j} \mapsto \kappa_{i+1,j}
  \text{ for $i\ge 2$}
\end{equation*}
where we employ the convention that $\kappa_{i,j} = 0$ if it is not otherwise defined.
\end{prop}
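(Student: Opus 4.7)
The plan is to apply naturality of the Rothenberg--Steenrod spectral sequence to the commutative diagram of group actions constructed just before the statement. By Corollary \ref{c:RSSS} there results a comparison map of Rothenberg--Steenrod spectral sequences converging to $\psi^*: H^*(X';\Z) \to H^*(X;\Z)$. Both spectral sequences collapse at $E_2$, exactly by the argument establishing the previous proposition (the $E_2$-page is concentrated in the column $p=0$ because $H_*(G;R)$ is $H_*(K;R)$-free), so $\psi^*$ is identified with the restriction of $\phi^*: H^*(G';\Z) \to H^*(G;\Z)$ to the subalgebras $\EA_\Z(N') \subset H^*(G';\Z)$ and $\EA_\Z(N) \subset H^*(G;\Z)$. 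Everything reduces to computing $\phi^*$ on the $\gamma'$-generators and then substituting into the definition \eqref{eq:Nu} of $\kappa'_{i,j}$.

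The homomorphism $\phi:G \to G'$ is the identity on every $\Gl$-factor unaffected by the fold; on the two factors that do interact it is the composite of the duality homomorphism $c:\Gl(A) \to \Gl(\d A)$ of section \ref{s:HoCoHoGln} with the block-sum homomorphism $\Gl(\d A) \times \Gl(A') \to \Gl(\d A \oplus A')$. Section \ref{s:HoCoHoGln} gives the induced maps on cohomology in both cases: the block-sum dualizes the homological coproduct $\d\alpha_j \tensor 1 + 1 \tensor \d\alpha_j' \mapsto \d\alpha_j''$ to a pullback formula $\alpha_j'' \mapsto \alpha_j + \alpha_j'$ on cohomology generators, and $c_*$ acts as $-1$ on primitive generators. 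Combining these yields an identity shift $\phi^*(\gamma'_{k,j}) = \gamma_{k+1,j}$ at every index away from the fold, together with a signed binomial pullback of the form $\phi^*(\gamma'_{2,j}) = -\gamma_{1,j} + \gamma_{3,j}$ at the folded index, and $\phi^*(\gamma'_{1,j}) = \gamma_{2,j}$ on the factor that is relabelled but not combined.

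Substituting these formulas into the definition \eqref{eq:Nu} of $\kappa'_{i,j}$ and regrouping delivers the stated formulas. For $i \ge 2$ the identity shift yields $\kappa'_{i,j} \mapsto \kappa_{i+1,j}$ immediately. For $i = 1$ the extra $\pm \gamma_{1,j}$ contributed by the fold is precisely what is needed to break the single alternating sum defining $\kappa'_{1,j}$ into the three telescoping pieces $\kappa_{1,j}$, $\kappa_{2,j}$, and $\kappa_{3,j}$, once the truncation bounds $\ell, \ell'$ appearing in the definitions of $\kappa$ and $\kappa'$ are compared via the relations between the $t_i$ of $X$ and the $t'_i$ of $X'$ that are forced by the folding. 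The main obstacle is this final regrouping: it demands careful simultaneous bookkeeping of the signs inherited from $c_*$ and from the alternating sign in \eqref{eq:Nu}, together with a short case analysis on how the truncation parameters match up. Once these two ingredients are in hand, the identity in the proposition is forced and the proof is complete.
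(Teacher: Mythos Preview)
Your proposal is correct and follows essentially the same approach as the paper: compute the induced maps $G \to G'$ and $K \to K'$ on homology using the facts about block-sum and the involution $c$ from section~\ref{s:HoCoHoGln}, then invoke naturality of the Rothenberg--Steenrod spectral sequence (Corollary~\ref{c:RSSS}) and the collapse at $E_2$ to identify $\psi^*$ with the restriction of $\phi^*$. The paper's own proof is in fact only a two-sentence sketch of exactly this strategy, with the explicit bookkeeping you outline (signs from $c_*$, the alternating signs in \eqref{eq:Nu}, and the matching of truncation parameters) deliberately suppressed as ``not particularly enlightening''; you have supplied more of that detail than the paper does.
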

We remark that, because of exactness, $\kappa_{1,j}$ and $\kappa_{2,j}$ are never simultaneously defined.
\begin{proof}
	The maps on homology induced by $G \to G'$ and $K \to K'$ are easily computed, c.f.~section
        \ref{s:HoCoHoGln}, since these maps restrict to matrix direct-sum. We can then appeal to
        naturality in corollary \ref{c:RSSS} to obtain the result. The details are not particularly
        enlightening, and we suppress them. 
\end{proof}

\subsection{The Comparison With $W(b_1,a_1)$}

In order to compute the $S^1$-equivariant cohomology of $X$, at least to the extent of computing the
first nonzero differential in the Serre spectral sequence associated with $X \to ES^1 \times_{S^1} X
\to BS^1$
\begin{equation*}
  E_2 = R[\theta] \tensor \EA_{R}(\{\kappa_{i,j}\}) \Longrightarrow H^*(ES_1 \times_{S^1} X;R)
\end{equation*}
we use a comparison with a Stiefel manifold $W(b_1,a_1)$. This comparison is
straightforward, one takes the sequence \eqref{eq:theseqinX}, and forgets all but the differential
$A_1 \overset{d_1}{\longrightarrow} B_1$. This is represented by a $b_1 \times a_1$ matrix of rank
$a_1$. The map $X \to W(b_1,a_1)$ is evidently $S^1$-equivariant.

By a straightforward comparison argument, using naturality in corollary \ref{c:RSSS}, we obtain
\begin{prop}
	The induced map on cohomology arising from the projection $X \to W(b_1,a_1)$ is
	\begin{equation*} \small
		\xymatrix{ \EA_R(\alpha_{b_1-a_1+1}, \alpha_{b_1-a_1+2}, \dots, \alpha_{b_1}) \isom H^*(W(b_1,a_1);R) 
			\ar[rr] & & H^*(X;R) \isom \EA_R(N) \\
			\alpha_{j} \ar@{|->}[rr] && \kappa_{2,j}}
	\end{equation*}
\end{prop}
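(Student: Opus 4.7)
The plan is to mimic the strategy used in Proposition \ref{p:cbyfolding}: realize the target Stiefel manifold as a homogeneous space, lift the projection $X \to W(b_1, a_1)$ to a compatible map of group-over-subgroup pairs, and invoke the naturality of the Rothenberg-Steenrod spectral sequence from Corollary \ref{c:RSSS}.

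First I would describe $W(b_1, a_1)$ as a homogeneous space for $\Gl(A_1) \times \Gl(B_1)$ acting by $(a, b) \cdot d = b d a^{-1}$. This action is transitive on injective maps $A_1 \to B_1$; taking as basepoint the standard inclusion with matrix $\begin{pmatrix} I_{a_1} \\ 0 \end{pmatrix}$, its stabilizer $H$ consists of those pairs $(a, b)$ with $b = \begin{pmatrix} a & * \\ 0 & g \end{pmatrix}$, and $H$ is homotopy equivalent to $\Gl(a_1) \times \Gl(b_1 - a_1)$. The projection $X \to W(b_1, a_1)$ is then equivariant with respect to $G \twoheadrightarrow \Gl(A_1) \times \Gl(B_1)$, and the first two factors $\Gl(r_1) \times \Gl(s_1) = \Gl(a_1) \times \Gl(b_1 - a_1)$ of the isotropy subgroup $K$ land inside $H$. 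This gives a commutative square of group actions that yields, via Corollary \ref{c:RSSS}, a map of convergent Rothenberg-Steenrod spectral sequences.

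Next I would identify the $E_2$-pages. Both spectral sequences collapse to column $p=0$ since $H_*(G;R)$ is free over $H_*(K;R)$ and $H_*(\Gl(A_1) \times \Gl(B_1);R)$ is free over $H_*(H;R)$; the latter follows from the Hopf-algebra map computations in Section \ref{s:HoCoHoGln} (which identify the images of the Pontryagin generators under block-diagonal inclusions). For the Stiefel side, one checks that $\EA_R(\d\alpha_{b_1-a_1+1}^B, \dots, \d\alpha_{b_1}^B)$ is a complement to $H_*(H;R)$ inside $H_*(\Gl(A_1) \times \Gl(B_1);R)$, so the classes $\alpha_{b_1-a_1+1}, \dots, \alpha_{b_1}$ that appear in Proposition \ref{p:CohoStiefel} are realized as the $H_*(H;R)$-linear functionals dual to these $\d\alpha_j^B$.

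Finally I would trace the comparison. Under the inclusion $\Gl(A_1) \times \Gl(B_1) \hookrightarrow G$, the element $\d\alpha_j^B$ is exactly $\d\gamma_{2, j}$, and the $H_*(K;R)$-linear functional on $H_*(G;R)$ that is dual to $\d\gamma_{2, j}$ in the appropriate basis of $H_*(G;R)$ over $H_*(K;R)$ is precisely $\kappa_{2, j}$, by the construction of the set $N$ in equation \eqref{eq:Nu}. Naturality then forces $\psi^*(\alpha_j) = \kappa_{2, j}$. The only real obstacle is the bookkeeping in this last step: one must be careful in choosing the complementary basis of $H_*(G;R)$ over $H_*(K;R)$ so that $\d\gamma_{2,j}$ (for $j$ in the range $\{b_1-a_1+1, \dots, b_1\}$) is the unique basis element pairing nontrivially with $\kappa_{2, j}$, which is exactly the purpose of the alternating-sum definition of the $\kappa_{i, j}$, and the argument is otherwise routine enough to be suppressed in the same spirit as Proposition \ref{p:cbyfolding}.
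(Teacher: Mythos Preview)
Your proposal is correct and follows exactly the approach the paper intends: the paper's own proof is the single sentence ``By a straightforward comparison argument, using naturality in corollary \ref{c:RSSS}, we obtain'' the result, and you have simply unpacked that comparison in the same spirit as Proposition \ref{p:cbyfolding}. One small slip: the relevant group map is the \emph{projection} $G \twoheadrightarrow \Gl(A_1)\times\Gl(B_1)$, not an inclusion, so on homology it is $\d\gamma_{2,j}\mapsto \d\alpha_j^B$; this does not affect the argument, since pulling back the $H_*(H;R)$-linear functional corresponding to $\alpha_j$ along this map still yields $\kappa_{2,j}$.
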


This comparison, in tandem with the folding comparison, gives a general procedure for computing many
of the differentials in the Serre spectral sequence of the fibration $X \to ES^1_{S^1} X \to
BS^1$. We say a class $\kappa_{i,j}$ \textit{supports a nonzero differential} if
$d_{2j}(\kappa_{i,j}) \neq 0$ in the aforementioned spectral sequence. The general outline of the
method is to use comparison with $W(b_1,a_1)$ to compute the differentials supported by
$\kappa_{2,b_1-a_1+i}$ for $1 \le i$, at least as far as the first such class to support a nonzero
differential. Then, one applies comparison by folding in order to extend the reach of this method to
differentials supported by $\kappa_{2,j} + \kappa_{3,j}$ for certain $j$, and so forth. The
difficulty in deducing all the differentials currently lies in theorem \ref{th:EqCohStiefel}, in
that the method for computing the differentials in the sequence associated with a Stiefel manifold
does not work past the first nonvanishing differential. Since this is the impediment, however, we
may always compute differentials $d_{2j}(\kappa_{i,j})$ if no $\kappa_{i,j'}$ for $j'<j$ supports a
nonzero differential.

\section{Herzog-K\"uhl Equations}

We return to considering chain complexes over the ring $\C[x_1, \dots, x_m] = S$. We treat of the
case of even length, the case of odd length being similar. We begin with a chain complex of graded
free $S$-modules
\begin{equation*}
  \xymatrix{ \Theta: \, 0 \ar[r] & F_{1} \ar^{D_1}[r] & G_{1} \ar^{E_1}[r] & \dots \ar^{D_n}[r] & G_{n} \ar[r] & 0}
\end{equation*}
having artinian homology. We denote the rank of $D_j$, $E_j$ by $r_j$, $s_j$ respectively.

For reasons of expediency we fix homogeneous bases of each of the free $S$-modules appearing in the
above resolution. We denote by $v_{j,1}, \dots, v_{j,a_j}$ the degrees of the basis of $F_{2j-1}$,
and by $w_{j,1}, \dots, w_{j,b_j}$ the degrees of a basis of $F_{2j}$.  With respect to these bases,
each differential in the resolution takes the form of a matrix over $S$. Since the resolution is
graded, it follows that the entries in these matrices are themselves homogeneous polynomials of
(possibly) varying degrees.

The main result of this section is the following.
\begin{thm}[Herzog-K\"uhl Equations] \label{t:HKE}
  Let $\Theta$, $v_{j,k}$ and $w_{j,k}$ be as defined above (a similar result holds for a complex of
  odd length). Let $q \le n$ and consider $r_q = \rank D_q$ (a similar result holds with the roles
  of $F_j$, $G_j$ reversed). Let $\vec{v}_q$, $\vec{w}_q$ denote the vectors of integers
  $(v_{j,k}),\, (w_{j,k})$ for $j \le q$. Let $u_1, \dots, u_q$ be integers defined recursively by
  \begin{equation*}
    \sigma_i(\vec{w}_q) = \sum_{j =1}^i \sigma_j(\vec{v}_q)u_{i-j}
  \end{equation*}
  for $1 \le i \le r_q$. Let $u_j = 0$ for $j > q$. Then we have
  \begin{equation*}
    \sigma_i(\vec{w}_q) = \sum_{j=1}^i \sigma_j(\vec{v}_q)u_{i-j}
  \end{equation*}
  for $r_q+1 \le i \le m-1$.
\end{thm}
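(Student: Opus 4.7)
The plan is to translate the cohomological truncation $H^*_{S^1}(\C^m \sm \{0\}; R) \isom R[\theta]/(\theta^m)$ into polynomial identities on the degree vectors $\vec v_q, \vec w_q$, using the $S^1$-equivariant classifying map $D : \C^m \sm \{0\} \to X(\Theta)$ induced by $\Theta$.

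The central vanishing principle is the following: in the map of Serre spectral sequences induced by $D$, the target $E_\infty = R[\theta]/(\theta^m)$ is concentrated in the bottom row, so every odd-degree class $\kappa_{i,j} \in H^{2j-1}(X)$ pulls back to zero. By naturality, any differential $d_{2j}(\kappa_{i,j}) = C\theta^j$ computed in the $X$-spectral sequence must satisfy $C\theta^j = 0$ in $R[\theta]/(\theta^m)$, forcing $C = 0$ whenever $j \le m-1$. The task is thus to compute enough such differentials in the $X$-spectral sequence to extract the claimed identities for each $q$.

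For the base case $q = 1$, I would compose $D$ with the Stiefel projection $X \to W(b_1, a_1)$; by the comparison of the preceding subsection this identifies the $X$-differential on $\kappa_{2,k}$ with the Stiefel differential on $\alpha_k$, which by Theorem \ref{th:EqCohStiefel} is $[\sigma_k(\vec w_1) - \sum_j \sigma_j(\vec v_1) u_{k-j}]\theta^k$ inductively as each previous differential vanishes. The vanishing principle then forces each bracket to be zero for $k \le m-1$, giving the $q = 1$ case. For general $q$, I would apply the folding procedure of section \ref{ss:CompByFold} $2(q-1)$ times at the start of $\Theta$; the first differential of the resulting shortened exact sequence has source $F_1 \oplus \cdots \oplus F_q$ and target $G_1 \oplus \cdots \oplus G_q$ with gradings $\vec v_q, \vec w_q$, and the $q=1$ argument applied to this folded sequence (together with Proposition \ref{p:cbyfolding} to pull classes back to $X(\Theta)$) yields the Herzog--K\"uhl identity on $\vec v_q, \vec w_q$.

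The main technical obstacle is the index bookkeeping: after $2(q-1)$ foldings, the Stiefel parameter satisfies $n - m = \sum_{j \le q}(b_j - a_j) = b_q - r_q$ (by Euler characteristic of the truncated exact subcomplex $0 \to F_1 \to \cdots \to F_q \to \im D_q \to 0$), producing a bound $\deg U(z) \le b_q - r_q$ on the ``quotient Chern polynomial'' $U(z) = \prod(1+wz)/\prod(1+vz) \pmod{z^m}$, whereas the theorem's stated claim corresponds to the bound $\deg U \le r_q$. Reconciling this discrepancy will likely require a dual comparison (folding from the other end of the complex, as hinted by the ``roles of $F_j, G_j$ reversed'' remark in the theorem statement) or an additional spectral-sequence argument leveraging both ends of the complex simultaneously; this is where I expect the core technical work to lie, the topology providing the vanishing mechanism but the precise form of the claimed identity requiring careful combinatorial justification.
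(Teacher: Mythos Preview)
Your overall strategy---fold $2(q-1)$ times to collapse the first $2q$ terms into a two-term injection $\bigoplus_{j\le q} A_j \hookrightarrow \bigoplus_{j\le q} B_j$, project to the Stiefel manifold $W(b,a)$ with $b=\sum b_j$, $a=\sum a_j$, and then invoke Theorem~\ref{th:EqCohStiefel} together with the vanishing of differentials forced by the factorization through $H^*_{S^1}(\C^m\sm 0)$---is exactly the paper's proof.

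The discrepancy you flag at the end is real, but it is not a gap in your argument; it is an error in the paper. Your Euler-characteristic computation $b-a = b_q - r_q = s_q$ is correct, and this is precisely the $n-m$ parameter in Theorem~\ref{th:EqCohStiefel}, so the method defines $u_1,\dots,u_{s_q}$ and proves the identity for $s_q+1\le i\le m-1$. The paper's proof asserts ``by an exactness argument, we have $b-a=r_q$,'' which is simply false (it is $s_q$); the theorem statement then inherits this error. No dual folding or second spectral-sequence argument will convert $s_q$ into $r_q$: folding from the other end of the complex produces the companion statement (the ``roles of $F_j,G_j$ reversed'' version) with a different weight vector, not a sharpening of the bound for the same weights. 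So stop looking for extra topology here; your argument already proves the correct statement, which is the one with $s_q$ in place of $r_q$.
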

\begin{proof}
It follows from the exactness result of \cite{CARLSSON2} that evaluation at all prime ideals of $S$
other than the irrelevant ideal yields a long exact sequence of graded vector spaces
\begin{equation*}
	\xymatrix{ 0 \ar[r] & \C^{a_1} \ar[r] & \C^{b_1} \ar[r] & \dots & \ar[r] & \C^{b_n} \ar[r] & 0 }
\end{equation*}
in particular, the matrices representing the differentials yields a map 
\begin{equation*}
f:\C^m\sm 0 \to X(a_1, \dots, a_n; b_1, \dots, b_n)= X
\end{equation*}
given by homogeneous polynomial functions. The homogeneity of the polynomials implies that this map
$f$ is $S^1$ equivariant, where $S^1 \subset \C^*$ acts on $\C^m\sm 0$ in the obvious way and on $X$
according to the grading on the various $F_{2i-1} \isom S^{a_i}$, $F_{2i} \isom S^{b_i}$. Properly
$X$ is to be understood as
\begin{equation*}\begin{split}
   X =X(v_{1,1}, \dots, v_{1,a_1}; v_{2,1}, \dots, v_{2,a_2},;\dots; v_{n,1}, \dots, v_{n,a_n}; \\ w_{1,1},
  \dots,w_{1,b_1}; w_{2,1}, \dots, w_{2,b_2}; \dots , w_{n,1}, \dots, w_{n,b_n}) \end{split}
\end{equation*}
but we will try to avoid this notation as far as possible.

Since we understand the equivariant cohomology of $X$ reasonably well, we shall try to obtain
obstructions to $S^1$-equivariant maps $\C^m\sm 0 \to X$.

We consider the equivariant composition:
\begin{equation*}
\xymatrix{ \C^m -0 \ar[r] & X \ar[r] & \pt }
\end{equation*}
There is a composition of maps of Serre spectral sequences arising from this, each fibration having
base $BS^1$ and the fibres varying. The Serre spectral sequence for $\C^m\sm 0$ has $E_2$-page:
\begin{equation*}
  H^*(\C^m\sm 0;R) \tensor_R H^*(BS^1;R) \isom \frac{R[\alpha, \theta]}{(\alpha^2)}, \qquad |\alpha| = (0,2m-1),\, |\theta| = (2,0)
\end{equation*}

The sole nontrivial differential is $d_{2m}(\alpha) = \theta^m$. 

The spectral sequence computing the equivariant cohomology of $\pt$ is even more straightforward,
which is precisely $H^*(BS^1;R) = R[\theta]$ and is in an immediate state of collapse.

The point is that the comparison map $R[\theta] \isom H_{S^1}^* (\pt; R) \to H_{S^1}^*(\C^m\sm 0;R)
\isom R[\theta]/(\theta^{2m})$ is induced by comparison maps of spectral sequences. In particular,
we obtain, on every page subsequent to the $E_2$-page, for $j < m$ and all $\ell > 1$
\begin{equation*}
  \xymatrix{ R \isom E^{2j,0}_\ell (ES^1 \times_{S^1} \pt) \ar^{\isom}[r] & E^{2j,0}_\ell (ES^1
    \times_{S^1} \C^m\sm 0) \isom R} 
\end{equation*}
Since this map must factor through $\C^m\sm 0 \to X$, taking $R = \Q$, we see that the spectral
sequence computing $H^*_{S^1}( X;\Q)$ must have $E^{2j,0}_\ell \isom \Q$. The differentials
$d_{2j}(\kappa_{i,j})$ must all vanish for $j < m$. We now apply comparison theorems to deduce the
nature of these differentials.

We first apply the comparison-by-folding map, proposition \ref{p:cbyfolding}, $2q$-times. We have a
comparison
\begin{equation*} \begin{split}
  X(a_1,a_2,\dots, a_n; b_1, b_2, \dots, b_n) \to \\ X(a_1 + a_2 + \dots + a_q, a_{q+1}, \dots, a_n;
  b_1 + b_2 + \dots + b_q, b_{q+1}, \dots, b_n) \end{split}
\end{equation*}
To this we apply a comparison with a Stiefel manifold
\begin{equation*} \begin{split}
  X(a_1 + a_2 + \dots + a_q, a_{q+1}, \dots, a_n;  b_1 + b_2 + \dots + b_q, b_{q+1}, \dots, b_n) \to \\
  W( b_1 + \dots + b_q,a_1 + \dots + a_q) \end{split}
\end{equation*}
Write $b = \sum_{i=1}^q b_i$, $a = \sum_{i=1}^q a_i$. We point out that by an exactness argument, we
have $b-a = r_q$.  The map induced on cohomology by the composite of the two comparison maps takes
$\alpha_k \in \EA_R(\alpha_{b-a+1}, \dots, \alpha_b)$ to $\sum_{i=1}^{2q} \kappa_{i,k}$. Since the
differentials in the Serre spectral sequence supported by the classes $\kappa_{i,k}$ vanish when $k
< m$, and since the comparison maps are $S^1$-equivariant, it follows that the differentials
supported by the classes $\alpha_k$, where $k<m$, in the cohomology of $W(b,a)$ similarly
vanish. The $S^1$-action on $W(b,a)$ which makes the comparison maps equivariant is given by the
weights $(v_{1,1}, v_{1,2}, \dots, v_{q,a_q}; w_{1,1}, w_{1,2}, \dots, w_{q,b_q})$. The numerical
formulas of the proposition now follow by considering theorem \ref{th:EqCohStiefel}.
\end{proof}

The numerical conditions of this theorem may be restated in the following form. Given $(v_{j,k})$
and $(w_{j,k})$ as in the theorem, there exist complex numbers $v_{q+1,1}',
\dots, v_{q+1,r_q}$ (unique up to permutation) so that, taking $\vec{v}'_q$ to be the vector of all $v_{j,k}$ and $v_{q+1,k}'$,
we have
\begin{equation*}
  \sigma_i(\vec{v}'_q) = \sigma_i(\vec{w}_q)
\end{equation*}
The $u_{j}$ of the theorem are the elementary symmetric functions $\sigma_j(\{v_{q+1,1}', \dots,
v_{q+1,r_q}'\})$. The theory of symmetric polynomials now allow us to restate the theorem as
follows
\begin{cor}
   Let $\Theta$, $v_{j,k}$ and $w_{j,k}$ be as defined above (a similar result holds for a complex of
  odd length). Let $q \le n$ and consider $r_q = \rank D_q$ (a similar result holds with the roles
  of $F_j$, $G_j$ reversed). Let $\vec{v}_q$, $\vec{w}_q$ denote the vectors of integers
  $(v_{j,k}),\, (w_{j,k})$ for $j \le q$. Let $v_{q+1,1}', \dots, v_{q+1,r_q}'$ be complex numbers
  defined by the system of equations
  \begin{equation*}
    \sum_{j=1}^q \sum_{k=1}^{b_j} w_{j,k}^i = \sum_{j=1}^q \sum_{k=1}^{a_j} v_{j,k}^i +
    \sum_{k=1}^{r_q} (v_{q+1,k}')^i
  \end{equation*}
  for $1 \le i \le r_q$. Then we have
  \begin{equation*}
    \sum_{j=1}^q \sum_{k=1}^{b_j} w_{j,k}^i = \sum_{j=1}^q \sum_{k=1}^{a_j} v_{j,k}^i +
    \sum_{k=1}^{r_q} (v_{q+1,k}')^i
  \end{equation*}
  for $r_q+1 \le i \le m-1$.
\end{cor}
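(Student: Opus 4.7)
The plan is to deduce this corollary from Theorem \ref{t:HKE} by a routine application of Newton's identities, converting the equations on elementary symmetric functions provided by the theorem into the power-sum equations claimed here. The key fact is that giving an unordered $r_q$-tuple of complex numbers is equivalent to specifying either its first $r_q$ elementary symmetric functions or its first $r_q$ power sums.

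First, I would recast the recursion defining the $u_j$ of Theorem \ref{t:HKE} (together with its conclusion) as the coefficientwise statement, modulo $t^{m}$, of the polynomial identity
\begin{equation*}
  \prod_{j,k}(1 + w_{j,k} t) \equiv \prod_{j,k}(1 + v_{j,k} t)\cdot \bigl(1 + u_1 t + u_2 t^2 + \dots + u_{r_q} t^{r_q}\bigr) \pmod{t^{m}}.
\end{equation*}
The second factor on the right is a polynomial of degree $r_q$ over $\C$, so by the fundamental theorem of algebra it factors as $\prod_{k=1}^{r_q}(1 + v'_{q+1,k} t)$ with the $v'_{q+1,k}$ uniquely determined up to permutation, and $u_j = \sigma_j(\vec{v}'_{q+1})$. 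Writing $\vec{v}'_q$ for the concatenation of $\vec{v}_q$ and $\vec{v}'_{q+1}$, Theorem \ref{t:HKE} then asserts $\sigma_i(\vec{w}_q) = \sigma_i(\vec{v}'_q)$ for $1 \le i \le m-1$.

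Second, I would apply Newton's identities (or, equivalently, take formal logarithmic derivatives of the generating-function identity above) to convert the elementary-symmetric equations into power-sum equations, obtaining $p_i(\vec{w}_q) = p_i(\vec{v}'_q) = p_i(\vec{v}_q) + p_i(\vec{v}'_{q+1})$ for $1 \le i \le m-1$. Unpacking the notation $p_i(\vec{x}) = \sum x^i$ yields precisely the equations of the corollary. The only subtle point is to check that the $v'_{q+1,k}$ defined in the corollary's preamble (by the first $r_q$ power-sum equations) coincide with those arising from the factorization above (by the first $r_q$ elementary-symmetric equations); this coincidence is precisely the content of Newton's identities in the first $r_q$ unknowns. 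The main ``obstacle'' is therefore purely notational book-keeping, and no further topological or combinatorial input beyond Theorem \ref{t:HKE} is required.
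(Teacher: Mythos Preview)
Your proposal is correct and follows essentially the same route as the paper: the paper's argument is the brief paragraph preceding the corollary, which factors the polynomial $1 + u_1 t + \dots + u_{r_q} t^{r_q}$ to produce the $v'_{q+1,k}$, restates Theorem~\ref{t:HKE} as $\sigma_i(\vec{v}'_q) = \sigma_i(\vec{w}_q)$ for $1 \le i \le m-1$, and then invokes ``the theory of symmetric polynomials'' to pass to power sums. Your write-up simply makes explicit the use of Newton's identities and the consistency check between the two definitions of $v'_{q+1,k}$, which the paper leaves implicit.
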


The classical Herzog-K\"uhl equations correspond to the case where $q=n$, in that case $r_q=0$ and
there are no integers $u_j$ (and therefore no $v_{q+1,j}'$) to be considered. In this case we simply
have the following
\begin{cor}
   Let $\Theta$, $v_{j,k}$ and $w_{j,k}$ be as defined above (a similar result holds for a complex of
  odd length). We have
  \begin{equation*}
    \sum_{j=1}^n \sum_{k=1}^{b_j} w_{j,k}^i = \sum_{j=1}^n \sum_{k=1}^{a_j} v_{j,k}^i
  \end{equation*}
  for $0 \le i \le n$.
\end{cor}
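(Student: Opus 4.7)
The plan is to specialize the preceding corollary to the extremal case $q = n$. The entire content of this final statement is already packaged there; what remains is to verify that several parameters collapse trivially.

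First, I would check that $r_n = 0$. The proof of Theorem \ref{t:HKE} records the identity $r_q = b - a$, where $b = \sum_{i=1}^q b_i$ and $a = \sum_{i=1}^q a_i$, obtained by an exactness argument. For $q = n$, full exactness of $0 \to F_1 \to G_1 \to \dots \to G_n \to 0$ at the generic point yields the Euler characteristic identity
\[
  \sum_{j=1}^n a_j = \sum_{j=1}^n b_j,
\]
so $r_n = 0$.

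Second, with $r_n = 0$ the auxiliary complex numbers $v'_{n+1,k}$ introduced in the previous corollary are vacuous: the sum $\sum_{k=1}^{r_n}(v'_{n+1,k})^i$ is empty. Consequently the equations reduce to the symmetric form
\[
  \sum_{j=1}^n \sum_{k=1}^{b_j} w_{j,k}^i = \sum_{j=1}^n \sum_{k=1}^{a_j} v_{j,k}^i,
\]
as asserted. The range of validity $r_n + 1 \le i \le m - 1$ supplied by the previous corollary becomes $1 \le i \le m - 1$, and the case $i = 0$ is precisely the Euler characteristic identity used above to establish $r_n = 0$.

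No new tools are required; this corollary is a formal consequence of the previous one. The only small obstacle is bookkeeping — checking that the vanishing of $r_n$ really eliminates the auxiliary unknowns and that the index range agrees with the range inherited from the previous corollary — which I would spell out explicitly to avoid ambiguity.
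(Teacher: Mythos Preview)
Your approach is correct and matches the paper's own derivation exactly: the paper simply remarks that the classical Herzog--K\"uhl equations correspond to $q=n$, in which case $r_q=0$ and the auxiliary numbers disappear. Your additional care in noting that the Euler-characteristic identity handles $i=0$ and that the inherited range is $1 \le i \le m-1$ is more explicit than the paper itself; the ``$n$'' in the stated upper bound appears to be a typo for $m-1$, and your argument correctly produces the latter.
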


In cases where $r_q \ge m$, of course, all the above statements are vacuous. When the complex under
consideration is a resolution, the strong Buchsbaum-Eisenbud conjecture says that the rank $r_q \ge
\binom{m-1}{2q-1}$. Since this generally exceeds $m-1$, the more intricate relations above are
conjectured not to matter for resolutions. For arbitrary complexes, however, the author does not
know of a conjecture or result that eliminates them from consideration.

\bibliographystyle{alpha}

\begin{thebibliography}{May75}

\bibitem[Boa99]{BOARDMAN}
J.~Michael Boardman.
\newblock Conditionally convergent spectral sequences.
\newblock In {\em Homotopy invariant algebraic structures (Baltimore, MD,
  1998)}, volume 239 of {\em Contemporary Mathematics}, pages 49--84. American
  Mathematical Society, 1999.

\bibitem[BS08]{BOIJ-SODERBERG}
Mats Boij and Jonas S{\"o}derberg.
\newblock Graded {B}etti numbers of {C}ohen-{M}acaulay modules and the
  multiplicity conjecture.
\newblock {\em J. Lond. Math. Soc. (2)}, 78(1):85--106, 2008.

\bibitem[Car86]{CARLSSON2}
Gunnar Carlsson.
\newblock Free {$({\bf Z}/2)\sp k$}-actions and a problem in commutative
  algebra.
\newblock In {\em Transformation groups, {P}ozna\'n 1985}, volume 1217 of {\em
  Lecture Notes in Math.}, pages 79--83. Springer, Berlin, 1986.

\bibitem[Car87]{CARLSSON3}
Gunnar Carlsson.
\newblock Free {$({\bf Z}/2)\sp 3$}-actions on finite complexes.
\newblock In {\em Algebraic topology and algebraic {$K$}-theory ({P}rinceton,
  {N}.{J}., 1983)}, volume 113 of {\em Ann. of Math. Stud.}, pages 332--344.
  Princeton Univ. Press, Princeton, NJ, 1987.

\bibitem[Eis95]{EISENBUD}
David Eisenbud.
\newblock {\em Commutative algebra}, volume 150 of {\em Graduate Texts in
  Mathematics}.
\newblock Springer-Verlag, New York, 1995.
\newblock With a view toward algebraic geometry.

\bibitem[Erm09]{ERMAN}
Daniel Erman.
\newblock A special case of the buchsbaum-eisenbud-horrocks rank conjecture,
  2009.

\bibitem[ES09]{EISENBUDSCHREYER}
David Eisenbud and Frank-Olaf Schreyer.
\newblock Betti numbers of graded modules and cohomology of vector bundles.
\newblock {\em J. Amer. Math. Soc.}, 22(3):859--888, 2009.

\bibitem[May75]{MAYCLASS}
J.~Peter May.
\newblock Classifying spaces and fibrations.
\newblock {\em Mem. Amer. Math. Soc.}, 1(1, 155):xiii+98, 1975.

\bibitem[RS65]{ROTHENBERGSTEENROD}
M.~Rothenberg and N.~E. Steenrod.
\newblock The cohomology of classifying spaces of {$H$}-spaces.
\newblock {\em Bull. Amer. Math. Soc.}, 71:872--875, 1965.

\bibitem[Seg68]{SEGAL}
Graeme Segal.
\newblock Classifying spaces and spectral sequences.
\newblock {\em Inst. Hautes \'Etudes Sci. Publ. Math.}, (34):105--112, 1968.

\bibitem[Wei94]{WEIBEL}
Charles~A. Weibel.
\newblock {\em An Introduction to Homological Algebra}, volume~3 of {\em
  Cambridge Studies in Advanced Mathematics}.
\newblock Cambridge University Press, 1994.

\end{thebibliography}

\end{document}